\renewcommand{\baselinestretch}{1.14}
\def\NAT@spacechar{~}
\crefname{lem}{Lemma}{Lemmas}
\crefname{thm}{Theorem}{Theorems}
\crefname{cor}{Corollary}{Corollaries}
\crefname{prop}{Proposition}{Propositions}
\crefname{conj}{Conjecture}{Conjectures}
\crefname{open}{Open Problem}{Open Problems}
\crefname{obs}{Observation}{Observations}
\theoremstyle{plain}
\newtheorem{thm}{Theorem}
\newtheorem{lem}[thm]{Lemma}
\newtheorem{cor}[thm]{Corollary}
\theoremstyle{definition}
\DeclarePairedDelimiter{\ceil}{\lceil}{\rceil}
\DeclareMathOperator{\tw}{tw}
\DeclareMathOperator*{\pw}{pw}
\DeclareMathOperator{\td}{td}
\renewcommand{\leq}{\leqslant}
\renewcommand{\geq}{\geqslant}
\theoremstyle{definition}
\DeclareMathOperator*{\bs}{\backslash}
\DeclareMathOperator*{\sse}{\subseteq}
\DeclareMathOperator*{\Pcal}{\mathcal{P}}
\DeclareMathOperator*{\Qcal}{\mathcal{Q}}
\title{\bf Structural Properties of Bipartite Subgraphs}
\author{%
	Robert Hickingbotham,\!\!%
	\thanks{School of Mathematics, Monash University, Melbourne, Australia (\texttt{robert.hickingbotham@monash.edu}). Research supported by an Australian Government Research Training Program Scholarship.}
	\,\,
	David R. Wood\thanks{School of Mathematics, Monash University, Melbourne, Australia (\texttt{david.wood@monash.edu}). Research supported by the Australian Research Council.}
}
\begin{document}
\maketitle
\date{}
\begin{abstract}
	This paper establishes sufficient conditions that force a graph to contain a bipartite subgraph with a given structural property. In particular, let $\beta$ be any of the following graph parameters: Hadwiger number, Haj\'{o}s number, treewidth, pathwidth, and treedepth. In each case, we show that there exists a function $f$ such that every graph $G$ with $\beta(G)\geq f(k)$ contains a bipartite subgraph $\hat{G}\sse G$ with $\beta(\hat{G})\geq k$. 
\end{abstract}
\section{Introduction}
A classical result by Erd\H{o}s \cite{erdos1965some} states that every graph contains a bipartite subgraph with at least half of the edges and that this is best possible for complete graphs. Many papers study the maximum number of edges in a bipartite subgraph if the graph satisfies some structural property such as being triangle-free (see \cite{alon1996bipartite, ABKS2003cut, AKS2005max, edwards1973extremal, erdos1988make, sudakov2007making}). Instead of maximising the number of edges, we consider other structural properties of bipartite subgraphs. In general, we are interested in sufficient conditions that force a graph to have a bipartite subgraph with a given property. Typically, we study this question for a graph parameter $\beta$.\footnote{A \emph{graph parameter} is a function $\beta$ such that $\beta(G)\in\mathbb{R}$ for every graph $G$ and $\beta(G_1)=\beta(G_2)$ for all isomorphic graphs $G_1$ and $G_2$.} In such cases, the first question is whether there exists a function $f$ such that every graph $G$ with $\beta(G)\geq f(k)$ contains a bipartite subgraph $\hat{G}\sse G$ with $\beta(\hat{G})\geq k$. We show that such a function exists for each of the following parameters: Hadwiger number (\Cref{Minors}), Haj\'{o}s number (\Cref{Minors}), treewidth (\Cref{Treewidth}), pathwidth (\Cref{PathwidthTreedepth}), and treedepth (\Cref{PathwidthTreedepth}). These parameters are important in structural graph theory, especially in Robertson-Seymour graph minor theory. The second question for such parameters is what is the least possible function $f$. We discuss this question for each of these parameters in the relevant section.

One motivation for exploring the properties of bipartite subgraphs is their application in the study of direct products.\footnote{For graphs $G_1$ and $G_2$, the \emph{direct product}, $G_1 \times G_2$, is defined with vertex set $V(G_1) \times V(G_2)$ where $(u_1,u_2)(v_1,v_2) \in E(G_1 \times G_2)$ if $u_1v_1 \in E(G_1)$ and $u_2v_2 \in E(G_2)$.} For example, a graph $G$ is not necessarily a minor of $G \times K_2$ \cite{bottreau1998some}, but it is easy to see that every bipartite subgraph of $G$ is a subgraph of $G \times K_2$. So in studying the structural properties of $G \times K_2$, it is very helpful to know about the behaviour of bipartite subgraphs in $G$. We explore this line of research in our companion papers \cite{HW2021products,HW2021hadwiger}.

\subsection{Relevant Literature}
For integers $a,b\geq 1$ where $a\leq b$, define $[b]:=\{1,\dots,b\}$ and $[a,b]:=\{a,a+1,\dots,b-1,b\}$. We now mention several results from the literature concerning structural properties of bipartite subgraphs. For minimum degree, it is folklore that every graph with minimum degree at least $2d$ contains a bipartite subgraph with minimum degree at least $d$ (see \cite{scott2021separation} for a proof). 

For connectivity, \citet{mader1972existenzn} showed that every graph with sufficiently large average degree contains a highly connected subgraph. In particular, for every integer $k \geq 1$, every graph with average degree at least $4k$ contains a $(k+1)$-connected subgraph. Now if a graph is $8k$-connected, then its average degree is at least $8k$. By Erd\H{o}s' result \cite{erdos1965some}, the graph contains a bipartite subgraph with average degree at least $4k$. Hence, Mader's and Erd\H{o}s' results together imply the following.

\begin{thm}
	For every integer $k \geq 1$, every graph with average degree at least $8k$ (in particular, every $8k$-connected graph) contains a $(k+1)$-connected bipartite subgraph.
\end{thm}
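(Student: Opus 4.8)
The plan is simply to chain together the two results quoted above; no new idea is needed. Since an $8k$-connected graph has minimum degree at least $8k$, and hence average degree at least $8k$, it suffices to treat an arbitrary graph $G$ with average degree at least $8k$, that is, with $|E(G)| \geq 4k\,|V(G)|$.

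The first step is to invoke Erd\H{o}s' theorem in its standard spanning form: taking a maximum cut $(A,B)$ of $G$ yields a bipartite \emph{spanning} subgraph $G'$ (with vertex set $V(G)$ and edge set the edges of $G$ between $A$ and $B$) satisfying $|E(G')| \geq \tfrac12|E(G)|$. Because $G'$ is spanning, $|E(G')| \geq 2k\,|V(G)| = 2k\,|V(G')|$, so $G'$ has average degree at least $4k$.

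The second step applies Mader's theorem to $G'$: every graph of average degree at least $4k$ contains a $(k+1)$-connected subgraph, so $G'$ contains a $(k+1)$-connected subgraph $\hat{G}$. Since $\hat{G} \sse G' \sse G$ and $G'$ is bipartite, $\hat{G}$ is a $(k+1)$-connected bipartite subgraph of $G$, which proves the theorem.

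I do not expect a genuine obstacle here; the only point requiring care is that the bipartite subgraph produced by Erd\H{o}s' argument is spanning, which is exactly what prevents the average-degree bound from deteriorating when passing from $G$ to $G'$. (If one only had "some bipartite subgraph with at least half the edges'' without control on the vertex count, the chaining would fail, but the max-cut construction delivers the spanning version for free.)
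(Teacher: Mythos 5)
Your proof is correct and follows essentially the same route as the paper: apply Erd\H{o}s' max-cut result to obtain a bipartite (spanning) subgraph with average degree at least $4k$, then apply Mader's theorem to extract a $(k+1)$-connected subgraph. Your explicit remark that the bipartite subgraph is spanning is a worthwhile clarification of a point the paper leaves implicit, but it does not change the argument.
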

 See \cite{BK2016subgraphs,yuster2003subgraphs} for improved bounds to Mader's result.
\section{Constructing Bipartite Subgraphs}
Suppose we have a proper black-white colouring of a graph. To \emph{switch} the colouring means to recolour the black vertices white and the white vertices black. For a graph $G$ with vertex-disjoint subgraphs $H_1$ and $H_2$, a path $P=(v_1,\dots,v_n)$ in $G$ \emph{joins} $H_1$ and $H_2$ if $V(P) \cap V(H_1)=\{v_1\}$ and $V(P) \cap V(H_2)=\{v_n\}$. The next lemma allows us to construct bigger bipartite subgraphs from smaller ones. 
 
\begin{lem}\label{MainLemmaConstructBipartiteSubgraphs}
	Let $G$ be a graph and let $H_1,\dots,H_t$ be vertex-disjoint bipartite subgraphs in $G$. Suppose there exists a set of internally disjoint paths $\Pcal=\{P_1,\dots, P_k\}$ in $G$ such that for all $i \in [k]$, $P_i$ joins $H_j$ and $H_{\ell}$ for some distinct $j,\ell \in [t]$ and that $P_i$ and $H_b$ are disjoint for all $b \in [t]\bs \{j,\ell\}$. Then there exists a subset $\Qcal \sse \Pcal$ where $|\Qcal|\geq k/2$ such that the graph $\hat{G}:=(\bigcup_{i \in [t]} H_i) \cup (\bigcup_{Q \in \Qcal}Q)$ is a bipartite subgraph of $G$.
\end{lem}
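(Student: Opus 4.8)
The plan is to use the freedom to \emph{switch} the given proper $2$-colouring of each $H_i$ independently; this reduces the problem to a standard parity (``max-$2$-XOR'', or signed-graph balancing) argument.

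First fix, for each $i\in[t]$, a proper black-white colouring $c_i$ of $H_i$. For $P_i\in\Pcal$ joining $H_j$ and $H_\ell$, let its endpoints be $u_i\in V(H_j)$ and $w_i\in V(H_\ell)$; these are uniquely determined since $P_i$ meets $H_j$ only in $u_i$, meets $H_\ell$ only in $w_i$, and is disjoint from every other $H_b$. A path on $n$ vertices has a proper $2$-colouring in which its two endpoints get the same colour exactly when $n$ is odd, and every proper $2$-colouring of a path is one of the two such colourings; hence attaching $P_i$ to $H_j\cup H_\ell$ yields a bipartite graph precisely when the colours assigned to $u_i$ and to $w_i$ by the colourings currently used on $H_j$ and $H_\ell$ are consistent with the parity of $|V(P_i)|$. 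Recording ``switch $H_i$ or not'' by a bit $x_i\in\{0,1\}$, this consistency condition for $P_i$ is a linear equation $x_j\oplus x_\ell=b_i$ over $\mathbb{F}_2$, where $b_i$ depends only on $c_j(u_i)$, $c_\ell(w_i)$ and the parity of $|V(P_i)|$.

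Next, choose the $x_i$ so that at least half of these $k$ equations hold. This is the familiar fact that in any system of $\mathbb{F}_2$-equations, each in two \emph{distinct} variables, some assignment satisfies at least half of them: choosing $x_1,\dots,x_t$ independently and uniformly at random, each equation $x_j\oplus x_\ell=b_i$ (here $j\ne\ell$) holds with probability $\tfrac12$, so the expected number satisfied is $k/2$, and therefore some assignment satisfies at least $k/2$ of them. Fix such an assignment and let $\Qcal\subseteq\Pcal$ consist of the $P_i$ whose equation holds, so $|\Qcal|\ge k/2$.

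Finally, verify that $\hat G:=(\bigcup_{i\in[t]}H_i)\cup(\bigcup_{Q\in\Qcal}Q)$ is bipartite by exhibiting a proper $2$-colouring: colour each $H_i$ by $c_i$ switched according to $x_i$, and colour the internal vertices of each $Q\in\Qcal$ using the unique proper $2$-colouring of the path that extends the colours already fixed on its two endpoints --- possible exactly because $Q$'s equation holds. Since $\Pcal$ is internally disjoint and each path meets the $H_b$'s only at its two endpoints, every internal path vertex is coloured exactly once, and every edge of $\hat G$ lies either inside some $H_i$ or along some $Q\in\Qcal$; in both cases it is properly coloured, so $\hat G$ is bipartite. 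I expect the only slightly delicate point to be the bookkeeping in this last step --- confirming that no vertex is coloured twice and that $\hat G$ has no edge outside the listed pieces --- but this is forced by the disjointness hypotheses; the real content is the reduction to the parity argument.
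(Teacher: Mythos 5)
Your proof is correct. Both arguments rest on the same reduction: fix a proper black--white colouring of each $H_i$, observe that attaching a path $P$ joining $H_j$ and $H_\ell$ preserves bipartiteness exactly when the colours of its two endpoints agree or disagree according to the parity of $|V(P)|$, and then exploit the freedom to switch the colourings of the blocks $H_1,\dots,H_t$ so that at least half of these parity constraints hold. Where you diverge from the paper is in how that half-satisfying set of switches is found: the paper processes the blocks one at a time by induction on $j$, looking at the paths from $H_j$ to the already-assembled bipartite graph $\hat{G}^{(j-1)}$ and switching $H_j$ whenever the ``disagreeable'' paths outnumber the ``agreeable'' ones, which yields $|\Qcal^{(j)}|\ge|\Pcal^{(j)}|/2$ at every stage; you instead encode the switches as bits $x_1,\dots,x_t$, note that each path gives an $\mathbb{F}_2$-equation $x_j\oplus x_\ell=b_i$ in two \emph{distinct} variables, and take a uniformly random assignment, so linearity of expectation gives some assignment satisfying at least $k/2$ equations in one shot. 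Your route is shorter and makes the max-$2$-XOR structure explicit; the paper's sequential majority-switching argument is the derandomized greedy version of the same idea, is fully constructive, and mirrors the classical greedy proof of Erd\H{o}s' half-the-edges bound. Your final verification that the chosen colourings extend along the selected paths and that no vertex or edge of $\hat{G}$ is missed is exactly the bookkeeping the disjointness hypotheses are there to guarantee, and you handle it correctly.
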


\begin{proof}
	For all $j \in [t]$, let $\Pcal^{(j)}$ be the set of paths in $\Pcal$ that join any $H_i$ and $H_{i'}$ where $i,i' \in [j]$. Note that $\emptyset=\Pcal^{(1)}\sse \Pcal^{(2)} \sse \dots \sse \Pcal^{(t)}=\Pcal$. We prove the following by induction on $j$.
	
	\textbf{Claim}: For all $j \in [t]$, there exists a subset $\Qcal^{(j)} \sse \Pcal^{(j)}$ where $|\Qcal^{(j)}|\geq |\Pcal^{(j)}|/2$ such that the graph $\hat{G}^{(j)}:=(\bigcup_{i \in [j]} H_i) \cup (\bigcup_{Q \in \Qcal^{(j)}}Q)$ is a bipartite subgraph of $G$.
	
	For $j=1$, the claim holds trivially since $\Pcal^{(1)}=\emptyset$. Now assume it holds for $j-1$. Then there exists a set of paths $\Qcal^{(j-1)}$ with size at least $|\Pcal^{(j-1)}|/2$ such that $\hat{G}^{(j-1)}$ is bipartite. Since $\hat{G}^{(j-1)}$ and $H_j$ are vertex-disjoint bipartite subgraphs, we may take a proper black-white colouring of their vertices. Let $A^{(j)}:= \Pcal^{(j)}\bs\Pcal^{(j-1)}$. Then each path in $A^{(j)}$ joins $H_j$ to some $H_i$ where $i \in [j-1]$. We say that a path $P=(v_1,\dots,v_n) \in A^{(j)}$ is \emph{agreeable} if there exists a proper black-white colouring of the vertices of $P$ such that the colouring of $v_1$ and $v_n$ corresponds with the colour they were previously assigned. Otherwise we say that $P$ is \emph{disagreeable}. Observe that if we switch the black-white colouring for the vertices in $H_j$, then all the agreeable paths in $A^{(j)}$ become disagreeable and all the disagreeable paths become agreeable. So if there are more disagreeable paths than agreeable, then switch the black-white colouring of the vertices in $H_j$. In doing so, the set of agreeable paths $B^{(j)}\sse A^{(j)}$ has size at least $|A^{(j)}|/2$. Let $\Qcal^{(j)}:=\Qcal^{(j-1)} \cup (\bigcup_{P \in B^{(j)}}P)$. Then $\hat{G}^{(j)}$ is bipartite and $|\Qcal^{(j)}|=|\Qcal^{(j-1)} \cup B^{(j)}| \geq | \Pcal^{(j-1)}|/2+|A^{(j)}|/2=|\Pcal ^{(j)}|/2$ as required. 
	
	The result follows when $j=t$.
\end{proof}
Note that \Cref{MainLemmaConstructBipartiteSubgraphs} generalises Erd\H{o}s' result \cite{erdos1965some}: if $H_1,\dots,H_t$ are the vertices of a graph $G$ and $\Pcal$ is the set of the edges in $G$, then by \Cref{MainLemmaConstructBipartiteSubgraphs}, $G$ contains a bipartite subgraph with at least half the edges.

\section{Minors}\label{Minors}
A graph $H$ is a \emph{minor} of a graph $G$ if a graph isomorphic to $H$ can be obtained from $G$ by vertex deletion, edge deletion, and edge contraction. The study of graph minors within the broader field of structural graph theory is a fundamental topic of research that was initiated by the seminal work of Robertson and Seymour. We now prove a sufficient condition under which a graph has a bipartite subgraph that contains a given graph as a minor. For a graph $G$, let $\nabla(G)$ be the maximum average degree of a minor of $G$. For a graph $H$, let $f(H)$ be the infimum of all real numbers $d$ such that every graph with average degree at least $d$ contains $H$ as a minor. \citet{mader1968homorphies} showed that $f(H)$ exists for every graph $H$. The function $f(H)$ is called the \emph{extremal function for $H$ minors} and has been studied for many graphs 
(see \cite{chudnovsky2011edge, jorgensen1994K8, KP2010K3t, KT2012Kst, KT2008Kst, ST2006K9, CLNWY2017disconnected, dirac19084homomorphism, HW2015cycles, HW2016average, HW2018petersen, kostochka1982mean, kostochka1984average, mader1968homorphies, MT2005noncomplete, RW2016sparse, TY2019triangle, thomason1984contraction, thomason2001extremal, wager1937uber}).

\begin{thm}\label{MainNablaExtremal}
	For all graphs $G$ and $H$, if $\nabla(G) \geq 2 f(H)$ then $G$ contains a bipartite subgraph $\hat{G}\sse G$ where $H$ is a minor of $\hat{G}$.
\end{thm}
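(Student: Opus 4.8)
The plan is to pass to a densest minor $M$ of $G$ (so the average degree of $M$ equals $\nabla(G)\ge 2f(H)$), replace each branch set of a model of $M$ by a spanning tree --- which is automatically bipartite --- and then invoke \Cref{MainLemmaConstructBipartiteSubgraphs} to retain at least half of the edges joining branch sets while keeping the union bipartite. Contracting the trees back down then recovers a minor of $M$ that is still dense enough to force an $H$-minor.

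In detail: since $G$ is finite it has only finitely many minors (up to isomorphism), so we may choose a minor $M$ of $G$ whose average degree equals $\nabla(G)$. Fix a model of $M$ in $G$, i.e.\ pairwise vertex-disjoint connected subgraphs $B_{v_1},\dots,B_{v_t}$ of $G$ (one per vertex of $M$), together with, for each edge $e=v_iv_j$ of $M$, an edge $g_e$ of $G$ having one end in $B_{v_i}$ and the other in $B_{v_j}$. For each $i$, let $T_i$ be a spanning tree of $B_{v_i}$; then $T_1,\dots,T_t$ are pairwise vertex-disjoint bipartite subgraphs of $G$. Viewing each $g_e$ as a path of length one, it joins $T_i$ and $T_j$ (for $e=v_iv_j$) and is disjoint from every other $T_b$ because the branch sets are disjoint; moreover distinct edges of $M$ give distinct edges of $G$ (a common edge would force the corresponding branch sets to meet), and a length-one path has no internal vertices, so $\Pcal:=\{g_e : e\in E(M)\}$ satisfies the hypotheses of \Cref{MainLemmaConstructBipartiteSubgraphs} with $k=|E(M)|$. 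The lemma then yields $\Qcal\sse\Pcal$ with $|\Qcal|\ge |E(M)|/2$ such that $\hat G:=\big(\bigcup_{i}T_i\big)\cup\big(\bigcup_{Q\in\Qcal}Q\big)$ is a bipartite subgraph of $G$.

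Finally, in $\hat G$ contract each connected tree $T_i$ to a single vertex. Since $M$ is simple and every edge of $\hat G$ not lying inside some $T_i$ is one of the $g_e\in\Qcal$ (and no such $g_e$ becomes a loop), this produces the spanning subgraph $M'$ of $M$ with edge set $\{e\in E(M):g_e\in\Qcal\}$, and $M'$ is a minor of $\hat G$. Now $|V(M')|=|V(M)|$ and $|E(M')|\ge |E(M)|/2$, so the average degree of $M'$ is at least $\tfrac12\cdot\tfrac{2|E(M)|}{|V(M)|}=\tfrac12\nabla(G)\ge f(H)$; hence, by the definition of $f(H)$, $M'$ contains $H$ as a minor, and therefore so does $\hat G$.

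The argument is essentially all in the set-up: the one genuinely substantive observation is that a minor model gives only \emph{connected} (not necessarily bipartite) branch sets, which is precisely what forces the passage to spanning trees, and the only place the constant is used is the factor $2$ in $\nabla(G)\ge 2f(H)$, which is exactly consumed by the (at most) halving of the joining edges in \Cref{MainLemmaConstructBipartiteSubgraphs}. The sole mild point requiring care is the extremal-function bookkeeping at the end --- that average degree at least $f(H)$ really does yield an $H$-minor of $M'$ --- which follows from the definition of $f(H)$ (with the hypothesis $\nabla(G)\ge 2f(H)$ providing the needed slack).
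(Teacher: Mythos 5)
Your proposal is correct and follows essentially the same route as the paper: pass to a densest minor, reduce its branch sets to trees (the paper does this via a minimal subgraph containing the minor, you via spanning trees of a model --- the same thing), apply \Cref{MainLemmaConstructBipartiteSubgraphs} to keep at least half the joining edges while staying bipartite, then contract the trees and invoke the definition of $f(H)$. No substantive difference from the paper's argument.
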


\begin{proof}
	Let $H_1$ be a minor of $G$ with average degree at least $2f(H)$ and let $G_1$ be a minimal subgraph of $G$ that contains $H_1$ as a minor. By minimality, $G_1$ consists of pairwise disjoint subtrees $T_1,\dots,T_t$ of $G$ such that for each $ij \in E(H_1)$, there exists an edge $v_iv_j \in E(G_1)$ where $v_i \in V(T_i)$ and $v_j \in V(T_j)$. Let $\Pcal:=\{v_iv_j:ij \in E(H_1)\}$. Since $T_1,\dots,T_t$ are bipartite, by \Cref{MainLemmaConstructBipartiteSubgraphs} there exists a subset $\Qcal\sse \Pcal$ where $|\Qcal|\geq |\Pcal|/2$ such that the subgraph $\hat{G}:=(\bigcup_{i \in [t]} T_i)\cup (\bigcup_{Q \in \Qcal}Q)$ is bipartite. 
	
	Let $H_2$ be the minor of $\hat{G}$ obtained by contracting each $T_i$ into a vertex. Then $|V(H_2)|=|V(H_1)|$ and $|E(H_2)|=|\Qcal|\geq |E(H_1)|/2$. As such, the average degree of $H_2$ is at least $f(H)$. By the definition of the extremal function, $H$ is a minor of $H_2$ and hence $H$ is a minor of $\hat{G}$. 
\end{proof}

A graph $H$ is a \emph{topological-minor} of a graph $G$ if a subgraph of $G$ is isomorphic to a subdivision of $H$. For a graph $G$, let $\tilde {\nabla}(G)$ be the greatest average degree of a topological-minor of $G$. For a graph $H$, let $\tilde{f}(H)$ be the infimum of all real numbers $d$ such that every graph with average degree at least $d$ contains $H$ as a topological-minor. \citet{mader1967topological} proved that $\tilde{f}(H)$ exists for every graph $H$. The function $\tilde {f}(H)$ is called the \emph{extremal function for topological $H$ minors} (see \cite{BT1998topological,KJS1994topological,KJS1996topological,KO2002topological, KO2004cycle,KO2006cliques,kuhn2006extremal, mader1968homorphies,EH1964topological,jung1970eine}). Using a similar approach to our proof of \Cref{MainNablaExtremal}, we prove an analogous result for topological-minors.
 
\begin{thm}\label{MainNablaExtremalTopological}
	For all graphs $G$ and $H$, if $\tilde {\nabla}(G) \geq 2 \tilde {f}(H)$ then $G$ contains a bipartite subgraph $\hat{G}\sse G$ where $H$ is a topological-minor of $\hat{G}$.
\end{thm}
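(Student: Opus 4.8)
I would prove this by adapting the proof of \Cref{MainNablaExtremal}, with individual branch vertices playing the role of the branch sets $T_i$. First I would fix a topological-minor $H_1$ of $G$ whose average degree equals $\tilde{\nabla}(G)\geq 2\tilde{f}(H)$ (the supremum is attained, since a topological-minor of $G$ is a simple graph on at most $|V(G)|$ vertices), and let $G_1\sse G$ be a minimal subgraph isomorphic to a subdivision of $H_1$. By minimality, $G_1$ \emph{is} a subdivision of $H_1$: it has distinct branch vertices $v_1,\dots,v_t$, one for each vertex of $H_1$, and for each $ij\in E(H_1)$ a path $P_{ij}$ in $G_1$ from $v_i$ to $v_j$, where the family $\{P_{ij}:ij\in E(H_1)\}$ is internally disjoint and no $P_{ij}$ has a branch vertex as an internal vertex (some $P_{ij}$ may be single edges, which is harmless).

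Next I would apply \Cref{MainLemmaConstructBipartiteSubgraphs}, taking the vertex-disjoint bipartite subgraphs to be the one-vertex graphs on $v_1,\dots,v_t$ and taking $\Pcal:=\{P_{ij}:ij\in E(H_1)\}$. Each $P_{ij}$ joins the bags $\{v_i\}$ and $\{v_j\}$ (meeting each exactly in its endpoint) and is disjoint from every other bag, so the hypotheses hold, and the lemma yields $\Qcal\sse\Pcal$ with $|\Qcal|\geq|\Pcal|/2$ such that $\hat{G}:=\{v_1,\dots,v_t\}\cup\bigcup_{Q\in\Qcal}Q$ is a bipartite subgraph of $G$. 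Now $\hat{G}$ is precisely a subdivision of the graph $H_2$ with $V(H_2)=V(H_1)$ and $E(H_2)=\{ij\in E(H_1):P_{ij}\in\Qcal\}$. Since $|E(H_2)|=|\Qcal|\geq|E(H_1)|/2$ and $|V(H_2)|=|V(H_1)|$, the average degree of $H_2$ is at least $\tilde{f}(H)$, so by the definition of the extremal function for topological minors, $H$ is a topological-minor of $H_2$; that is, some subgraph of $H_2$ is a subdivision of $H$. Replacing each edge of that subdivision by the corresponding path $P_{ij}\sse\hat{G}$ produces a subdivision of $H$ inside $\hat{G}$, so $H$ is a topological-minor of $\hat{G}$, as required.

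I do not expect a serious obstacle: this is a routine adaptation of \Cref{MainNablaExtremal}. The one point to get right is the structural bookkeeping for subdivisions -- confirming that a \emph{minimal} subdivision of $H_1$ in $G$ yields an internally disjoint family of paths meeting each branch vertex only at an endpoint, so that \Cref{MainLemmaConstructBipartiteSubgraphs} applies verbatim, and that the topological-minor relation composes transitively (a subdivision of a graph that contains a subdivision of $H$ again contains a subdivision of $H$). As in \Cref{MainNablaExtremal}, there is also a minor issue about whether the infimum defining $\tilde{f}(H)$ is attained when $H_2$ has average degree exactly $\tilde{f}(H)$; I would handle it exactly as there.
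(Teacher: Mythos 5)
Your proposal is correct and follows essentially the same route as the paper's proof: take a subdivision of a maximum-average-degree topological minor $H_1$, apply \Cref{MainLemmaConstructBipartiteSubgraphs} with the branch vertices as the (single-vertex) bipartite subgraphs and the subdivided edges as $\Pcal$, and conclude via the extremal function that $H_2$, hence $\hat{G}$, contains $H$ as a topological minor. The extra points you flag (minimality of $G_1$, attainment of the infimum, transitivity of the topological-minor relation) are harmless refinements of what the paper does implicitly.
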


\begin{proof}
	Let $H_1$ be an $n$-vertex graph that is a topological-minor of $G$ with average degree at least $2\tilde{f}(H)$. Let $G_1$ be a subgraph of $G$ that is isomorphic to a subdivision of $H_1$. Then $G_1$ consists of a set of $n$ vertices $X=\{x_1,\dots,x_n\}$ such that for each $ij \in E(H_1)$, there exists an $(x_i,x_j)$-path $P_{i,j}$ in $G_1$ where $V(P_{i,j})\cap X=\{x_i,x_j\}$ and these paths are internally disjoint. Let $\Pcal:=\{P_{i,j}:ij \in E(H_1)\}$. By \Cref{MainLemmaConstructBipartiteSubgraphs}, there exists $\Qcal\sse \Pcal$ where $|\Qcal|\geq |\Pcal|/2$ such that the subgraph $\hat{G}:=X\cup (\bigcup_{Q \in \Qcal}Q)$ is bipartite. 
	
	Let $H_2$ be the topological-minor of $\hat{G}$ that is obtained by contracting each $P_{i,j}\in \Qcal$ into an edge. Then $|V(H_2)|=|V(H_1)|$ and $|E(H_2)|=|\Qcal|\geq |E(H_1)|/2$. As such, the average degree of $H_2$ is at least $\tilde{f}(H)$. By the definition of $\tilde{f}(H)$, $H_2$ contains $H$ as a topological-minor and hence $H$ is a topological-minor of $\hat{G}$.
\end{proof}

We now derive results concerning complete-minors and complete-topological-minors in bipartite subgraphs. For a graph $G$, the \emph{Hadwiger number}, $\eta(G)$, of $G$ is the maximum integer $t$ such that $K_t$ is a minor of $G$. Similarly, the \emph{Haj\'{o}s number}, $\tilde{\eta}(G)$, of $G$ is the maximum integer $t$ such that $K_t$ is a topological-minor of $G$. \citet{kostochka1982mean, kostochka1984average} and \citet{thomason1984contraction} independently proved that $f(K_t)\in \Theta(t\sqrt{\log(t)})$.
\begin{thm}[\cite{kostochka1982mean,kostochka1984average,thomason1984contraction}]\label{AvgDegForceMinor}
	For every integer $t\geq 1$, we have $f(K_t)\leq (c+o(1))t\sqrt{\log(t)}$ for some positive constant $c$.
\end{thm}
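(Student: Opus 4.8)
I sketch a proof of the upper bound: there is an absolute constant $C$ such that every graph with average degree at least $Ct\sqrt{\log t}$ has a $K_t$ minor, i.e.\ $f(K_t)\le Ct\sqrt{\log t}$. (The matching lower bound $f(K_t)\ge c't\sqrt{\log t}$, which shows the exponent $\tfrac12$ on $\log t$ is best possible, comes from analysing a random graph $G(n,p)$ with $p$ a suitable constant, and I do not pursue it.)

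First, a standard reduction to large minimum degree. Given $G$ with average degree at least $Ct\sqrt{\log t}$, repeatedly delete any vertex whose current degree is less than $\tfrac14 Ct\sqrt{\log t}$. This process cannot exhaust $V(G)$, since deleting $k\le |V(G)|$ vertices removes fewer than $k\cdot\tfrac14 Ct\sqrt{\log t}\le \tfrac14 Ct\sqrt{\log t}\cdot|V(G)| < |E(G)|$ edges, so it halts at a nonempty subgraph $G'$ with $\delta(G')\ge \tfrac14 Ct\sqrt{\log t}$. Hence it suffices to show that a graph $G'$ with $\delta(G')\ge c_0 t\sqrt{\log t}$ (for a suitable constant $c_0$) contains $t$ pairwise-disjoint connected subgraphs $B_1,\dots,B_t$ such that for all $i\ne j$ there is an edge of $G'$ between $B_i$ and $B_j$; such a family is a model of a $K_t$ minor.

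To produce the branch sets I would argue by contradiction from an extremal clique minor. Suppose $G'$ has no $K_t$ minor; set $p:=\eta(G')\le t-1$ and fix a model of $K_p$ by branch sets $B_1,\dots,B_p$ with each $B_i$ a subtree of $G'$, with $\sum_i |B_i|$ minimum, and, subject to that, with $|\bigcup_i B_i|$ minimum. Put $U:=V(G')\setminus \bigcup_i B_i$. Maximality of $p$ shows that no vertex of $U$ has a neighbour in all $p$ of the $B_i$ (such a vertex would yield a $K_{p+1}$ minor), and the minimality choices, applied to the leaves of the trees $B_i$ via standard exchange arguments, tightly constrain how the branch sets attach to one another and to $U$. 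Double-counting the edges incident with $U$ together with the edges between the $B_i$, and using $\delta(G')\ge c_0 t\sqrt{\log t}$, then forces $p$ to be large — a contradiction once the constant is tuned.

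I expect the delicate point, and the source of the $\sqrt{\log t}$ factor rather than $\log t$, to be that the branch sets need not be small: the extremal configuration has each $B_i$ of order $\sqrt{\log t}$ vertices, so one must weight the branch sets by their sizes and invoke convexity (Jensen's inequality) to extract the saving; a crude union bound over the $\binom{t}{2}$ pairs would only give the weaker estimate $f(K_t)=O(t\log t)$. Pinning down the \emph{optimal} constant $c$, as opposed to some constant, is a further refinement (due to Thomason) obtained by running essentially this extremal analysis with an optimal probabilistic averaging in place of the ad hoc exchange argument.
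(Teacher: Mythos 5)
This theorem is quoted in the paper from Kostochka and Thomason; the paper gives no proof of it, so your attempt can only be measured against the original arguments. Your first step, the reduction from average degree to a nonempty subgraph $G'$ with minimum degree at least a quarter of the average degree, is correct and standard, and your framing of a $K_t$ minor as $t$ disjoint connected branch sets pairwise joined by edges is the right target.

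The gap is that the entire quantitative content of the theorem sits in the step you only gesture at. Taking a maximum clique minor with branch sets of minimum total size and double-counting the edges incident with $U$ and between the $B_i$ is essentially Mader's classical scheme, and arguments of that shape are only known to give bounds of order $2^{t}$ or, with more care, $t\log t$ --- as you yourself concede for the crude union bound. The passage from $t\log t$ to $t\sqrt{\log t}$ is precisely what makes this theorem hard: Kostochka's proof is a delicate induction that first extracts a small, very dense minor and tightly controls its structure, while Thomason's proof constructs branch sets of order roughly $\sqrt{\log t}$ inside such a dense minor by a random (or weighted greedy) selection, with explicit estimates for the probability that two candidate branch sets fail to be joined, followed by an optimisation of the parameters. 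Your proposal replaces all of this with the assertion that double-counting plus Jensen ``forces $p$ to be large once the constant is tuned''; no inequality is derived, no exchange argument is actually carried out, and no reason is given why this scheme should beat the $t\log t$ barrier rather than reproduce it. Until that central calculation is done (or one of the known proofs is reproduced), the bound $f(K_t)\leq (c+o(1))t\sqrt{\log t}$ has not been established; the minimum-degree reduction and the closing remarks about the random-graph lower bound and Thomason's optimal constant are correct but peripheral.
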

Note that \citet{thomason2001extremal} subsequently determined the best possible $c$. \Cref{MainNablaExtremal,AvgDegForceMinor} immediately imply the following.

\begin{thm}\label{MainHadwigerBipartiteSubgraph}
	For every integer $t\geq 1$ and graph $G$, if $\eta(G)\geq (2c+o(1))t\sqrt{\log(t)}$ then $G$ contains a bipartite subgraph $\hat{G}\sse G$ such that $\eta(\hat{G})\geq t$ where $c$ is the constant from \Cref{AvgDegForceMinor}.
\end{thm}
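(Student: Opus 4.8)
The plan is to derive this as an immediate consequence of \Cref{MainNablaExtremal} and \Cref{AvgDegForceMinor}, using the observation that a $K_s$-minor already certifies a minor of large average degree.

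First I would set $s:=\eta(G)$, so that $K_s$ is a minor of $G$. Since $K_s$ has average degree $s-1$, this gives $\nabla(G)\geq s-1$. Next I would unwind the asymptotics: by \Cref{AvgDegForceMinor} there is a positive constant $c$ with $2f(K_t)\leq (2c+o(1))t\sqrt{\log t}$, so the implicit $o(1)$ term in the hypothesis of the present theorem can be chosen (relative to the one in \Cref{AvgDegForceMinor}, and large enough to swallow the additive $-1$) so that the assumption $\eta(G)\geq (2c+o(1))t\sqrt{\log t}$ forces
\[
\nabla(G)\ \geq\ s-1\ \geq\ 2f(K_t).
\]
Finally I would apply \Cref{MainNablaExtremal} with $H:=K_t$: since $\nabla(G)\geq 2f(K_t)$, there is a bipartite subgraph $\hat G\subseteq G$ in which $K_t$ is a minor, which is precisely the statement $\eta(\hat G)\geq t$.

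I do not anticipate any genuine obstacle; the only point requiring care is the bookkeeping of the two $o(1)$ terms together with the $-1$ coming from the fact that $K_s$ has average degree $s-1$, and this is routine since $o(1)\cdot t\sqrt{\log t}\to\infty$ absorbs the additive constant.
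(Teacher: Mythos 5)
Your proposal is correct and matches the paper's intended argument: the paper states that \Cref{MainNablaExtremal,AvgDegForceMinor} immediately imply the result, with exactly the reasoning you give (a $K_s$-minor yields $\nabla(G)\geq s-1\geq 2f(K_t)$, then apply \Cref{MainNablaExtremal} with $H=K_t$). The bookkeeping of the $o(1)$ terms and the additive $-1$ is routine, as you note.
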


 For the extremal function for complete-topological-minors, \citet{BT1998topological} and \citet{KJS1996topological} independently proved that $\tilde{f}(K_t)= \Theta(t^2)$.
\begin{thm}[\cite{BT1998topological,KJS1996topological}]\label{AvgDegForceTopMinor}
	For every integer $t\geq 1$, we have $\tilde{f}(K_t)\leq (c+o(1))t^2$ for some positive constant $c$.
\end{thm}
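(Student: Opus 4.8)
The plan is to derive \Cref{AvgDegForceTopMinor} from Mader's connectivity theorem (quoted in the introduction, due to \citet{mader1972existenzn}) together with the Bollob\'as--Thomason theorem on highly linked graphs \cite{BT1998topological}. Recall that a graph is \emph{$\ell$-linked} if for every choice of $2\ell$ distinct vertices $s_1,t_1,\dots,s_\ell,t_\ell$ there exist pairwise vertex-disjoint paths $Q_1,\dots,Q_\ell$ with $Q_i$ joining $s_i$ and $t_i$. Bollob\'as and Thomason proved that there is an absolute constant $\lambda$ (one may take $\lambda=22$) such that every $\lambda\ell$-connected graph is $\ell$-linked. Since a $K_t$-subdivision is precisely a choice of $t$ branch vertices together with $\binom t2$ internally disjoint paths joining them in pairs, the idea is: force an $\Omega(t^2)$-connected subgraph via Mader, and then produce all $\binom t2$ connecting paths simultaneously using linkedness. (This is the Bollob\'as--Thomason route; \citet{KJS1996topological} obtain the same $\Theta(t^2)$ bound instead by passing to a subgraph with good vertex-expansion and routing the paths through linear-sized balls.)

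Concretely, put $\ell:=\binom t2$ and fix a constant $c$ large enough that $c\,t^2\ge 4\lambda\ell+4t$ for all $t\ge1$; since $\lambda\ell=\Theta(t^2)$ any sufficiently large absolute $c$ works (e.g.\ $c=44$). Suppose $G$ has average degree at least $c\,t^2$. By Mader's theorem, $G$ has a subgraph $H$ with $\kappa(H)\ge\lambda\ell+t$, so $\delta(H)\ge\kappa(H)$ is far larger than $t^2$. Choose any $t$ vertices $b_1,\dots,b_t$ of $H$ as branch vertices, and then greedily choose, for each ordered pair $(i,j)$ with $i\neq j$, a neighbour $u_{i,j}$ of $b_i$ in $V(H)\setminus\{b_1,\dots,b_t\}$ with all $t(t-1)$ of the $u_{i,j}$ distinct; this is possible because each $b_i$ has far more than $t^2$ neighbours. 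Let $H':=H-\{b_1,\dots,b_t\}$, so $\kappa(H')\ge\lambda\ell$ and hence $H'$ is $\ell$-linked. Apply this with the $\ell$ terminal pairs $\{u_{i,j},u_{j,i}\}$ over $1\le i<j\le t$, obtaining pairwise disjoint paths $Q_{ij}$ in $H'$ with $Q_{ij}$ joining $u_{i,j}$ to $u_{j,i}$. For each pair $\{i,j\}$ let $P_{ij}$ be the path obtained by prepending the edge $b_iu_{i,j}$ and appending the edge $u_{j,i}b_j$ to $Q_{ij}$. Then the branch vertices occur only as endpoints of the $P_{ij}$, all other vertices of $P_{ij}$ lie in $H'$, the $u_{i,j}$ are distinct, and the interiors of the $Q_{ij}$ are disjoint; so the $P_{ij}$ are internally disjoint and $\bigcup_{i<j}P_{ij}$ is a subdivision of $K_t$ in $G$. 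Hence $\tilde f(K_t)\le c\,t^2$, as required.

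The genuine content, and the main obstacle, is the \emph{linear} dependence of linkedness on connectivity: without the Bollob\'as--Thomason inequality one only gets much weaker bounds on $\tilde f(K_t)$ (superlinear, or exponential from a naive greedy search), and proving that $\lambda\ell$-connectivity already forces an $\ell$-linkage is the hard step. The remaining issue is purely bookkeeping: one cannot feed the branch vertices directly into a linkage routine because each must be an endpoint of $t-1$ connecting paths, and splitting $b_i$ into the $t-1$ private neighbours $u_{i,j}$ resolves this at the cost of only $O(t)$ extra connectivity and $O(t^2)$ extra vertices, which the constant $c$ absorbs. I would also note that optimising $c$ (and establishing the matching lower bound $\tilde f(K_t)=\Omega(t^2)$, witnessed by a random graph of average degree $\Theta(t^2)$ with no topological $K_t$ — needed for the $\Theta(t^2)$ claim in the surrounding text but not for the stated inequality) requires substantially more effort and is not needed for the bipartite-subgraph applications in this paper.
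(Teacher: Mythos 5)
This theorem is not proved in the paper at all: it is quoted from the literature, attributed to \citet{BT1998topological} and \citet{KJS1996topological}. Your proposal is a correct reconstruction of the Bollob\'as--Thomason route, with the one genuinely hard ingredient --- that $22\ell$-connectivity forces an $\ell$-linkage --- correctly isolated and used as a black box (note that this linkage theorem is from their earlier \emph{Highly linked graphs} paper, not the 1998 paper cited for the present statement, which is where the reduction you describe is carried out). The surrounding bookkeeping is sound: with $\ell=\binom t2$ and $c$ chosen so that $ct^2\ge 4(\lambda\ell+t)$, Mader's theorem (as quoted in the introduction, \citet{mader1972existenzn}) yields a subgraph $H$ with $\kappa(H)\ge\lambda\ell+t$; the minimum-degree count shows the $t(t-1)$ private neighbours $u_{i,j}$ can be chosen distinct and outside the branch set; deleting the $t$ branch vertices lowers connectivity by at most $t$, so $H'$ is $\ell$-linked; and since each linkage path $Q_{ij}$ lies in $H'$ and the $Q_{ij}$ are pairwise vertex-disjoint, the extended paths $P_{ij}$ are internally disjoint and form a $K_t$-subdivision. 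Your constant $c=44$ indeed satisfies $44t^2\ge 44t(t-1)+4t$ for all $t\ge1$, and proving the stronger bound $\tilde f(K_t)\le ct^2$ is more than the stated $(c+o(1))t^2$ requires; as you note, the matching lower bound is not needed for the inequality as stated nor for its use in \Cref{MainHajosNumber}.
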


See \cite{kuhn2006extremal} for the best known bounds on $c$. \Cref{MainNablaExtremalTopological,AvgDegForceTopMinor} imply the following.

\begin{thm}\label{MainHajosNumber}
	For every integer $t\geq 1$ and graph $G$, if $\tilde{\eta}(G)\geq (2c+o(1))t^2$, then $G$ contains a bipartite subgraph $\hat{G} \sse G$ with $\tilde{\eta}(\hat{G}) \geq t$ where $c$ is the constant from \Cref{AvgDegForceTopMinor}.
\end{thm}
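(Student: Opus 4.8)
The plan is to obtain \Cref{MainHajosNumber} as an immediate consequence of \Cref{MainNablaExtremalTopological,AvgDegForceTopMinor}, in exact analogy with the way \Cref{MainHadwigerBipartiteSubgraph} follows from \Cref{MainNablaExtremal,AvgDegForceMinor}. The only input special to complete-topological-minors is the quadratic extremal bound $\tilde f(K_t)\le (c+o(1))t^2$ of \Cref{AvgDegForceTopMinor}; everything else is asymptotic bookkeeping.

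First I would unpack the hypothesis. If $\tilde\eta(G)\ge (2c+o(1))t^2$, then by definition $K_s$ is a topological-minor of $G$, where $s:=\lceil (2c+o(1))t^2\rceil$. Since $K_s$ has average degree $s-1$, this gives $\tilde\nabla(G)\ge s-1\ge (2c+o(1))t^2-1=(2c+o(1))t^2$, the last step merely absorbing the additive constant $-1$ into the $o(1)$ term.

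Next I would invoke the extremal function. Applying \Cref{AvgDegForceTopMinor} with $H:=K_t$ yields $2\tilde f(K_t)\le (2c+o(1))t^2$. Choosing the implicit constants so that the $o(1)$ hidden in the hypothesis dominates the one here (concretely: for every $\varepsilon>0$ the assumption $\tilde\eta(G)\ge (2c+\varepsilon)t^2$ beats $2\tilde f(K_t)\le (2c+\tfrac{\varepsilon}{2})t^2$ once $t$ is sufficiently large), we conclude $\tilde\nabla(G)\ge 2\tilde f(K_t)$. Then \Cref{MainNablaExtremalTopological} with $H=K_t$ produces a bipartite subgraph $\hat G\sse G$ in which $K_t$ is a topological-minor, i.e.\ $\tilde\eta(\hat G)\ge t$, which is the conclusion.

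There is no genuine obstacle here: the argument is a two-step composition of results already established. The only point requiring a moment's care is the asymptotic bookkeeping---verifying that the ``$-1$'' loss incurred in passing between $\tilde\eta$ and average degree, together with the doubling of $\tilde f(K_t)$, is still swallowed by the $o(1)$ factor in the statement. This is routine and structurally identical to the Hadwiger-number case, so I would present it in a few lines.
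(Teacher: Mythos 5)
Your proposal is correct and matches the paper exactly: the paper states \Cref{MainHajosNumber} as an immediate consequence of \Cref{MainNablaExtremalTopological,AvgDegForceTopMinor}, which is precisely the two-step composition you carry out, with the $o(1)$ bookkeeping you mention being the only detail to absorb. Nothing further is needed.
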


It is open whether the bounds from \Cref{MainHadwigerBipartiteSubgraph,MainHajosNumber} are asymptotically best possible.

We now consider \emph{shallow minors}. For an integer $r\geq 0$ and graph $G$, let $\nabla_r(G)$ be the greatest average degree of a minor of $G$ that can be obtained by contracting pairwise disjoint subgraphs of radius at most $r$ in $G$ into vertices and then taking a subgraph. The function $\nabla_r$ is of particular importance in graph sparsity theory (see \cite{nevsetvril2012sparsity}).

\begin{thm}\label{MainExpansionFunction}
	For every integer $r\geq 0$, every graph $G$ contains a bipartite subgraph $\hat{G}_r \sse G$ such that $\nabla_r(\hat{G}_r)\geq \frac{1}{2}\nabla_r(G)$.
\end{thm}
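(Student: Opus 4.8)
The plan is to imitate the proof of \Cref{MainNablaExtremal}, taking care that the radius-$r$ constraint is preserved at every step. First I would let $H_1$ be a minor of $G$ attaining $\nabla_r(G)$ (a maximum, since $G$ is finite): that is, there are pairwise disjoint connected subgraphs $B_1,\dots,B_t$ of $G$, each of radius at most $r$, such that for each $ij\in E(H_1)$ there is an edge $v_iv_j\in E(G)$ with $v_i\in V(B_i)$ and $v_j\in V(B_j)$, and the average degree of $H_1$ equals $\nabla_r(G)$. For each $i\in[t]$ fix a centre $c_i$ of $B_i$ and let $T_i$ be a BFS tree of $B_i$ rooted at $c_i$; since BFS preserves distances from the root, each $T_i$ is a tree of radius at most $r$, and the $T_i$ are pairwise disjoint.

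Now set $\Pcal:=\{v_iv_j : ij\in E(H_1)\}$. Each member of $\Pcal$ is a single edge joining two of the trees $T_i$ and disjoint from all the others, and distinct edges are trivially internally disjoint, so the bipartite subgraphs $T_1,\dots,T_t$ (trees are bipartite) together with $\Pcal$ satisfy the hypotheses of \Cref{MainLemmaConstructBipartiteSubgraphs}. Hence there is $\Qcal\sse\Pcal$ with $|\Qcal|\ge|\Pcal|/2$ such that $\hat G_r:=(\bigcup_{i\in[t]}T_i)\cup(\bigcup_{Q\in\Qcal}Q)$ is a bipartite subgraph of $G$.

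It remains to bound $\nabla_r(\hat G_r)$ from below. Let $H_2$ be obtained from $\hat G_r$ by contracting each $T_i$ to a vertex. The $T_i$ are pairwise disjoint connected subgraphs of $\hat G_r$, each of radius at most $r$, so $H_2$ is of the form permitted in the definition of $\nabla_r$, and therefore $\nabla_r(\hat G_r)$ is at least the average degree of $H_2$. Since $H_1$ is simple, the contraction creates no parallel edges, so $|V(H_2)|=|V(H_1)|=t$ and $|E(H_2)|=|\Qcal|\ge|\Pcal|/2=|E(H_1)|/2$; hence the average degree of $H_2$ is at least half that of $H_1$, i.e.\ at least $\tfrac12\nabla_r(G)$. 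Combining the two inequalities gives $\nabla_r(\hat G_r)\ge\tfrac12\nabla_r(G)$, as required.

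The only delicate point — and the part I would treat as the crux of the argument — is verifying that the manipulations do not destroy the depth-$r$ minor structure: replacing each $B_i$ by its BFS tree $T_i$ keeps a connected branch set of radius at most $r$ inside $\hat G_r$, and deleting the connecting edges not in $\Qcal$ affects none of the $T_i$, so $H_2$ really is a depth-$r$ shallow minor of the bipartite graph $\hat G_r$. Everything else is exactly the edge-counting bookkeeping from \Cref{MainNablaExtremal}.
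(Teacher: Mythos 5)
Your proof is correct and follows essentially the same route as the paper: exhibit the depth-$r$ minor via disjoint trees of radius at most $r$ with one connecting edge per minor edge, apply \Cref{MainLemmaConstructBipartiteSubgraphs} to keep at least half the connecting edges in a bipartite subgraph, and recontract to get a shallow minor of average degree at least $\tfrac12\nabla_r(G)$. The only cosmetic difference is that you obtain the radius-$r$ trees explicitly as BFS trees rooted at centres, whereas the paper gets them from a minimality argument; the two devices are interchangeable here.
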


\begin{proof}
	Let $H$ be a graph with average degree $\nabla_r(G)$ that can be obtained from $G$ by contracting pairwise disjoint subgraphs of radius at most $r$ in $G$ into vertices and then taking a subgraph. Let $G_1$ be a minimal subgraph of $G$ from which $H$ can be obtained by contracting subgraphs in $G_1$ of radius at most $r$ into vertices. By minimality, $G_1$ consists of pairwise disjoint trees $T_1,\dots,T_t$ each with radius at most $r$ such that for each $ij \in E(H)$, there exists an edge $v_iv_j \in E(G_1)$ where $v_i \in V(T_i)$ and $v_j \in V(T_j)$. Let $\Pcal:=\{v_iv_j:ij \in E(H)\}$. Since $T_1,\dots,T_t$ are bipartite, by \Cref{MainLemmaConstructBipartiteSubgraphs} there exists $\Qcal\sse \Pcal$ where $|\Qcal|\geq |\Pcal|/2$ such that the subgraph $\hat{G}_r:=(\bigcup_{i \in [t]} T_i)\cup (\bigcup_{Q \in \Qcal}Q)$ is bipartite. 
	
	For each $i \in V(H)$, contract the $T_i$ subgraph in $\hat{G}_r$ into a vertex and let $\hat{H}$ be the graph obtained. Now $|V(\hat{H})|=|V(H)|$ and $|E(\hat{H})|=|\Qcal|\geq |E(H)|/2$. As such, the average degree of $\hat{H}$ is at least $\frac{1}{2}\nabla_r(G)$ and since $\hat{H}$ is obtained from $\hat{G}_r$ by contracting pairwise disjoint subgraphs with radius at most $r$, it follows that $\nabla_r(\hat{G}_r)\geq \frac{1}{2}\nabla_r(G)$.
\end{proof}

It is open whether there exists a function $g$ such that every graph $G$ contains a bipartite subgraph $\hat{G}\sse G$ where for every integer $r \geq 0$, if $\nabla_r(G) \geq g(k)$ then $\nabla_r(\hat{G})\geq k$.

\section{Treewidth}\label{Treewidth}
For a graph $G$, a \emph{tree-decomposition} of $G$, $(T,\{B_t\sse V(G)\}_{t \in V(T)})$, is a pair that satisfies the following properties: 

\begin{compactitem}
	\item $T$ is a tree;
	\item For all $v \in V(G)$, $T[\{t \in V(T): v \in B_t\}]$ is a non-empty subtree of $T$; and
	\item For all $uv \in E(G)$, there is a node $t \in V(T)$ such that $u,v \in B_t$.
\end{compactitem}

The \emph{width} of a tree-decomposition is $\max\{|B_t|-1:t \in V(T)\}$. The \emph{treewidth} of $G$, $\tw(G)$, is the minimum width of a tree-decomposition of $G$. A \emph{path-decomposition} of $G$ is a tree-decomposition of $G$ where the tree is a path. The \emph{pathwidth} of $G$, $\pw(G)$, is the minimum width of a path-decomposition of $G$. Treewidth and pathwidth respectively measure how similar a graph is to a tree and to a path, and are important parameters in algorithmic and structural graph theory (see \cite{bodlaender1993tourist,HW2017tied,reed1997treewidth}). Tree-decompositions and path-decompositions were introduced by Robertson and Seymour \cite{robertson1986algorithmic,robertson1983graph}. 

The following theorem is our key contribution in this section.

\begin{thm}\label{MaintwBipartiteSubgraph}
	There exists a function $f$ such that every graph $G$ with $\tw(G)\geq f(k)$ contains a bipartite subgraph $\hat{G}\sse G$ such that $\tw(\hat{G})\geq k$.
\end{thm}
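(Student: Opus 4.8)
The natural strategy is to find a large "bramble" or a large grid-like structure inside $G$, and then extract a bipartite subgraph that retains enough of that structure to force large treewidth. The cleanest route uses the Grid Minor Theorem of Robertson and Seymour: there is a function $h$ such that every graph with treewidth at least $h(r)$ contains the $r\times r$ grid $\boxtimes_r$ as a minor. So assume $\tw(G) \geq h(r)$ for a suitable $r = r(k)$ to be fixed later, and fix a grid minor of $G$: pairwise disjoint connected subgraphs (branch sets) $B_{i,j}$ for $(i,j)\in[r]^2$, together with, for each edge of the grid, a chosen edge of $G$ between the corresponding branch sets. Each $B_{i,j}$ contains a spanning tree $T_{i,j}$, which is bipartite, and the branch-edges form a set of (single-edge) paths joining these trees. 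Applying \Cref{MainLemmaConstructBipartiteSubgraphs} to the trees $T_{i,j}$ and this set of joining edges yields a bipartite subgraph $\hat G \subseteq G$ that, after contracting each $T_{i,j}$, is a subgraph of $\boxtimes_r$ keeping at least half of the grid's edges.

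**The main obstacle.** The difficulty is that \Cref{MainLemmaConstructBipartiteSubgraphs} only guarantees that half the joining edges survive, and "half of a grid's edges" is not in itself enough — an arbitrary subgraph of $\boxtimes_r$ with half the edges can have bounded treewidth (e.g.\ delete every horizontal edge, leaving a disjoint union of paths). So the real work is to arrange that the retained half of the edges still forces large treewidth. I see two ways to handle this. One option is to run the lemma more carefully: rather than applying it to the full grid at once, process the branch sets in the grid's natural row-by-row order and keep track of which edges are kept, arguing that one can always retain a sub-grid (or a "wall", i.e.\ a subdivision-friendly variant) of dimension $\Omega(r)$; but controlling exactly which edges survive under the lemma's switching argument looks delicate. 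The more robust option is to bound treewidth of $\hat G$ from below directly: $\hat G$ contains, as a minor, a spanning subgraph $J$ of $\boxtimes_r$ with $|E(J)| \geq |E(\boxtimes_r)|/2 \geq (r-1)r \geq r^2 - r$, hence $\hat G$ has a minor on $r^2$ vertices with at least $r^2 - r$ edges, i.e.\ average degree close to $2$. That alone is weak, so one instead wants a quantitative "many edges in a subgraph of the grid forces treewidth" statement.

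**Making it quantitative.** The key auxiliary fact I would prove (or cite) is: there is a function $\phi$ with $\phi(r)\to\infty$ such that every subgraph $J\subseteq\boxtimes_r$ with $|E(J)|\geq \tfrac12|E(\boxtimes_r)|$ satisfies $\tw(J)\geq \phi(r)$. This should follow from a counting/expansion argument: a subgraph missing at most half the grid edges cannot be separated into small pieces by few vertices, because the grid has good expansion and deleting a constant fraction of edges leaves a subgraph with a large well-linked set (a standard bramble argument). Concretely, if $J$ had a balanced separator of size $s$, one could bound the number of surviving edges in terms of $s$ and $r$ and derive $s = \Omega(r)$. Granting such a $\phi$, set $r := r(k)$ so that $\phi(r)\geq k$, and set $f(k):= h(r(k))$; then $\tw(\hat G)\geq \tw(J) \geq \phi(r) \geq k$, as required. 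Alternatively, and perhaps more elegantly, one can avoid the grid entirely: take a bramble of order $\tw(G)+1$ in $G$, pick a spanning tree in each bramble element and a path realising each pairwise touching, apply \Cref{MainLemmaConstructBipartiteSubgraphs}, and check that the surviving "half-bramble" still has order $\Omega(\tw(G))$ — though again the sticking point is that losing half the connections could in principle destroy the touching/bramble structure, so the separator-counting argument on a well-linked set seems unavoidable.
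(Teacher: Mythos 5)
Your high-level plan (grid theorem plus \Cref{MainLemmaConstructBipartiteSubgraphs}) is the right starting point, and you correctly identify the central obstacle, but the concrete step you propose to overcome it is false. The auxiliary fact in your ``Making it quantitative'' paragraph --- that every subgraph $J$ of the $r\times r$ grid with $|E(J)|\geq \tfrac12|E(G_{r\times r})|$ has treewidth at least $\phi(r)$ with $\phi(r)\to\infty$ --- is refuted by the very example you mention earlier: keeping all vertical edges and no horizontal edges retains exactly half of the $2r(r-1)$ edges and leaves a disjoint union of paths, of treewidth $1$. No constant-fraction strengthening saves it either: keeping all edges except the horizontal edges between columns $i$ and $i+1$ for $i$ divisible by $w$ retains a $\bigl(1-\Theta(1/w)\bigr)$-fraction of the edges, yet the graph is a disjoint union of $r\times w$ grids and has treewidth at most $w$, independent of $r$. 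For the same reason the separator-counting sketch cannot work: a subgraph of the grid can be edge-dense inside thin strips while having no global connectivity at all, so few surviving edges are not forced by small balanced separators. Your bramble alternative is only a sketch and, as you note yourself, hits the same problem (losing half the connections can destroy the touching structure), so the proposal as written has a genuine gap: ``half the edges of a grid-like structure'' never yields a treewidth lower bound growing with $r$; one must control \emph{which} edges survive.

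That is exactly what the option you set aside as ``delicate'' requires, and it is the route the paper takes (\Cref{TreewidthWall,BipartiteSubgraphWall}). Instead of a grid minor one takes a subdivision of a wall (walls have maximum degree $3$, so the grid theorem gives the wall as a topological minor, i.e.\ as an actual subgraph), which removes the branch-set bookkeeping. Then \Cref{MainLemmaConstructBipartiteSubgraphs} is applied not once but $k$ times, adding one horizontal path at a time, and --- this is the decisive point --- one maintains a \emph{nested} family of surviving columns $A_1\supseteq A_2\supseteq\dots\supseteq A_k$ with $|A_j|\geq k2^{k-j}$, so that the columns that survive all $k$ rounds carry vertical paths at \emph{every} level. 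Starting from a $(k2^{k+1}\times k)$-wall subdivision, at least $k$ columns survive, and the union of the corresponding horizontal subpaths and vertical paths is a bipartite subdivision of the $(2k\times k)$-wall, whose treewidth is at least $k$. The exponential blow-up $k2^{k+1}$ is precisely the price of halving the surviving columns at each of the $k$ inductive steps; whether this can be made polynomial is stated in the paper as an open problem, so the quantitative shortcut you hoped for is not merely unproved but is the known bottleneck.
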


Before proving \Cref{MaintwBipartiteSubgraph}, we need to introduce two families of graphs that are relevant to the proof. The first are grids. For integers $j,k\geq 1$, the $(j \times k)$-grid, $G_{j \times k}$, is the graph with vertex set $V(G_{j \times k})=\{(v_1,v_2):v_1\in[j],v_2 \in [k]\}$ where $(v_1,v_2)(u_1,u_2)\in E(G_{j \times k})$ if $v_1=u_1$ and $|v_2-u_2|=1$, or $v_2=u_2$ and $|v_1-u_1|=1.$ The edge is \emph{vertical} when $v_1=u_1$, and is \emph{horizontal} when $v_2=u_2$. For $i \in [k]$, the \emph{$i^{th}$-horizontal path} in the grid is the unique path from $(1,i)$ to $(j,i)$ where each edge in the path is horizontal. 

The second family are walls. For integers $j,k\geq 1$ and $i \in [k]$, let $\bar{P}^{(i)}=(v_1^{(i)},\dots, v_{2j}^{(i)})$ be a path on $2j$ vertices. The $(2j \times k)$-wall, $W_{2j \times k}$, consists of the paths $\bar{P}^{(1)},\dots,\bar{P}^{(k)}$ where for each $\ell \in [j]$ and $i \in [2,k]$, we add the edge $v_{2\ell}^{(i-1)}v_{2\ell}^{(i)}$ whenever $i$ is even and the edge $v_{2\ell-1}^{(i-1)}v_{2\ell-1}^{(i)}$ whenever $i$ is odd (see \Cref{fig:wall}). For every $i \in [k]$ and $\ell \in [2j]$, we call $\bar{P}^{(i)}$ a \emph{horizontal path} in the wall and $v_{\ell}^{(i-1)}v_{\ell}^{(i)}$ a \emph{vertical edge} (if it exists). Observe that the $(2j \times k)$-wall is a spanning subgraph of the $(2j \times k)$-grid where for each $i \in [k]$, $\bar{P}^{(i)}$ is the $i^{th}$-horizontal path in $G_{2j \times k}$ and every vertical edge in $W_{2j \times k}$ corresponds to a vertical edge in $G_{2j \times k}$.

\begin{figure}[!h]
	\centering\includegraphics[width=0.45\textwidth]{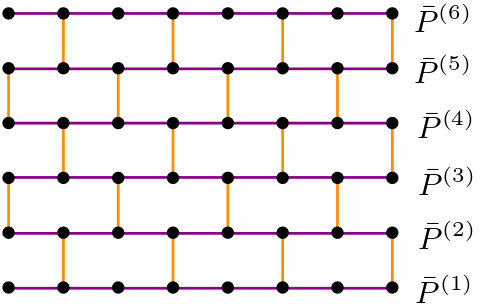}
	\caption{The $(8 \times 6)$-wall, $W_{8 \times 6}$.}
	\label{fig:wall}
\end{figure}
\citet{robertson1986planar} showed that grids are a canonical example of graphs with large treewidth.

\begin{thm}[\cite{robertson1986planar}]\label{ExcludedGridMinorTheorem}
	There exists a function $f$ such that every graph with treewidth at least $f(k)$ contains $G_{k \times k}$ as a minor. Moreover, $\tw(G_{k \times k})= k$.
\end{thm}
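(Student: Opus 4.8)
The two assertions are of rather different character, and I would prove them separately.

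\textbf{The equality $\tw(G_{k\times k})=k$ (for $k\ge 2$).} For the upper bound I would write down an explicit path-decomposition of width $k$ obtained by sweeping a staircase-shaped separator of $k+1$ vertices across the grid. Taking the vertices of $G_{k\times k}$ as pairs $(a,b)$ with $a,b\in[k]$, the bags $B_{i,j}:=\{(i,b):b\ge j\}\cup\{(i+1,b):b\le j\}$ for $i\in[k-1]$ and $j\in[k]$, listed in lexicographic order of $(i,j)$, each have exactly $k+1$ vertices, together cover every edge, and have contiguous occurrence sets; hence $\tw(G_{k\times k})\le\pw(G_{k\times k})\le k$. For the lower bound I would invoke the duality between treewidth and brambles (Seymour--Thomas) and exhibit a bramble of order $k+1$ in $G_{k\times k}$ (equivalently, a haven of order $k+1$): the standard construction combines row- and column-segments of the grid with a tie-breaking rule that, for each separator of size at most $k$, pins down the ``large'' side it leaves behind. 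This also explains why the grid is the canonical obstruction to small treewidth.

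\textbf{The implication: $\tw(G)\ge f(k)$ forces $G_{k\times k}$ as a minor.} This is the Excluded Grid Theorem, so I would not aim for a short argument but would follow the now-standard route through well-linked sets and routing. First, from $\tw(G)\ge f(k)$ I would extract (again via bramble/treewidth duality) a \emph{well-linked} set $W\subseteq V(G)$ whose size grows with $f(k)$: for every pair of disjoint equal-size $A,B\subseteq W$ there are $|A|$ pairwise disjoint $A$--$B$ paths in $G$. Splitting $W$ into two equal halves and routing between them yields a family $\mathcal{P}=\{P_1,\dots,P_m\}$ of $m=\Omega(|W|)$ pairwise disjoint paths --- the ``horizontal skeleton''. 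Applying well-linkedness a second time to carefully chosen vertices spread along the $P_i$ produces a second large family $\mathcal{Q}$ of pairwise disjoint paths, each meeting many of the $P_i$ --- the ``vertical skeleton''.

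\textbf{Taming the crossings --- the main obstacle.} Initially $\mathcal{P}$ and $\mathcal{Q}$ interact wildly: a path of $\mathcal{Q}$ may enter and leave a path of $\mathcal{P}$ many times and in any order. The heart of the proof, and the step I expect to be hardest, is to extract from $\mathcal{P}\cup\mathcal{Q}$ a sub-configuration whose intersection pattern is a clean $k\times k$ grid --- that is, $k$ disjoint connected subgraphs crossing $k$ other disjoint connected subgraphs in exactly the grid incidence pattern --- from which $G_{k\times k}$ is obtained by contraction. I would carry this out by iterating three kinds of move: (i) discard a bounded fraction of the paths to regularise how often, and in what cyclic order, the surviving $\mathcal{Q}$-paths meet the surviving $\mathcal{P}$-paths; (ii) reroute along paths to eliminate ``backward'' crossings; and (iii) control the loss at each stage, so that a grid of side $k$ survives once $f(k)$ is taken large enough. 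This uncrossing step is exactly what forces $f$ to grow: in the original Robertson--Seymour proof $f$ is enormous, and obtaining a polynomial bound (Chekuri--Chuzhoy and subsequent work) needed considerably more delicate routing. Since only the existence of some function $f$ is required here, I would use the cleanest version of the routing/uncrossing argument rather than the quantitatively best one.
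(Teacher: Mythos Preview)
The paper does not prove this statement at all: it is quoted with attribution to \cite{robertson1986planar} and used as a black box, with a pointer to \cite{CC2016polynomial,CT2021excluded,chuzhoy2015grid,chuzhoy2016improved} for quantitative improvements. So there is no ``paper's proof'' to compare your proposal against; any correct argument you supply goes strictly beyond what the paper does.

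On the substance of your sketch: the computation $\tw(G_{k\times k})=k$ is fine. Your explicit path-decomposition with bags $B_{i,j}$ of size $k+1$ is valid (consecutive bags differ in one vertex, every edge lies in some bag), and the bramble/haven lower bound is the standard Seymour--Thomas argument. For the excluded-grid direction, what you outline---extract a large well-linked set, route two transversal path families, then clean up the intersection pattern---is the shape of the modern treatments (closer to the expositions in Diestel or to the Chekuri--Chuzhoy line than to the original Robertson--Seymour proof, which proceeds differently). As you yourself flag, the ``taming the crossings'' step is where all the real work lives, and your description of it is a plan rather than an argument; but since the theorem is being cited, not proved, that level of detail is appropriate here.
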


In a recent breakthrough, \citet{CC2016polynomial} showed that the function $f$ in \Cref{ExcludedGridMinorTheorem} is polynomial (see \cite{CT2021excluded} for the current best upper bound as well as \cite{chuzhoy2015grid,chuzhoy2016improved}). 

Now $G_{k \times k}$ is a minor of $W_{2k \times 2k}$ and hence $\tw(W_{2k \times 2k})\geq k$. Furthermore, since walls have maximum degree $3$, $W_{2k \times 2k}$ is a minor of a graph $G$ if and only if $W_{2k \times 2k}$ is a topological-minor of $G$. Hence, \Cref{ExcludedGridMinorTheorem} implies the following.

\begin{cor}\label{TreewidthWall}
	There exists a function $f$ such that every graph with treewidth at least $f(k)$ contains $W_{2k \times 2k}$ as a topological-minor. Moreover, $\tw(W_{2k \times 2k})\geq k$.
\end{cor}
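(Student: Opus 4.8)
The plan is to derive the corollary directly from the Excluded Grid Minor Theorem (\Cref{ExcludedGridMinorTheorem}), using the two observations noted just above: that $W_{2k\times 2k}$ is a spanning subgraph of $G_{2k\times 2k}$ and has $G_{k\times k}$ as a minor, and that a graph of maximum degree $3$ (such as a wall) occurs as a minor of $G$ exactly when it occurs as a topological-minor of $G$.

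For the ``moreover'' statement I would first check that $G_{k\times k}$ is a minor of $W_{2k\times 2k}$. This is routine: pair up the $2k$ horizontal paths of the wall into $k$ consecutive blocks, contract within each block the vertical edges joining its two paths so that the block collapses to a single path, and then contract every second edge along each of these paths (the alternating parity of the vertical-edge pattern is exactly what makes this consistent), identifying the columns in pairs and leaving a copy of $G_{k\times k}$. Since treewidth is minor-monotone and $\tw(G_{k\times k})=k$ by \Cref{ExcludedGridMinorTheorem}, this yields $\tw(W_{2k\times 2k})\ge k$.

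For the existence of $f$, let $f_0$ be the function from \Cref{ExcludedGridMinorTheorem} and set $f(k):=f_0(2k)$. If $G$ is a graph with $\tw(G)\ge f(k)$, then $G$ contains $G_{2k\times 2k}$ as a minor, and hence contains its spanning subgraph $W_{2k\times 2k}$ as a minor; by the preceding observation (maximum degree $3$), $W_{2k\times 2k}$ is then a topological-minor of $G$, as required.

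The only step carrying any content is the explicit minor model of $G_{k\times k}$ inside $W_{2k\times 2k}$ — verifying that the wall's alternating vertical-edge pattern lines up so that the described contractions really do produce a $(k\times k)$-grid. Everything else is immediate from minor-monotonicity of treewidth, the subcubic minor/topological-minor equivalence, and \Cref{ExcludedGridMinorTheorem}.
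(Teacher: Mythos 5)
Your proposal is correct and follows essentially the same route as the paper: apply \Cref{ExcludedGridMinorTheorem} with parameter $2k$, note the wall is a spanning subgraph of the grid and has maximum degree $3$ (so minor and topological-minor coincide), and get $\tw(W_{2k\times 2k})\geq k$ from the fact that $G_{k\times k}$ is a minor of $W_{2k\times 2k}$ together with minor-monotonicity of treewidth. The only difference is that you sketch an explicit minor model of $G_{k\times k}$ in $W_{2k\times 2k}$, which the paper simply asserts.
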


We now show that every subdivision of a sufficiently large wall contains a bipartite subgraph that is a subdivision of a large wall. This lemma is the heart of the proof for \Cref{MaintwBipartiteSubgraph}.

\begin{lem}\label{BipartiteSubgraphWall}
	Every graph $G$ that is a subdivision of the $(k2^{k+1} \times k)$-wall, $W_{k2^{k+1} \times k}$, contains a bipartite subgraph $\hat{G} \sse G$ which is a subdivision of the $(2k\times k)$-wall.
\end{lem}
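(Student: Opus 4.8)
The plan is to obtain $\hat G$ by restricting $G$ to a carefully chosen set of $2k$ of the $m:=k2^{k+1}$ columns of the host wall $W:=W_{m\times k}$, keeping all $k$ rows intact. Let $v^{(i)}_\ell$ (for $i\in[k]$, $\ell\in[m]$) denote the branch vertices of $G$, i.e.\ the images of the vertices of $W$, and call the vertical edges of $W$ between $\bar{P}^{(i-1)}$ and $\bar{P}^{(i)}$ \emph{strip $i$} (for $i\in[2,k]$); recall that a column $\ell$ carries a vertical edge in strip $i$ exactly when $\ell\equiv i\pmod 2$. Let $a^{(i)}_\ell\ge 1$ be the length in $G$ of the path subdividing the horizontal edge $v^{(i)}_\ell v^{(i)}_{\ell+1}$, and let $b^{(i)}_\ell\ge 1$ be the length of the path subdividing the vertical edge of strip $i$ at column $\ell$. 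For any columns $c_1<\dots<c_{2k}$ with $c_j\equiv j\pmod 2$, let $\hat G$ be the subgraph of $G$ formed by the $k$ row-paths of $G$ restricted to columns $c_1,\dots,c_{2k}$ together with every vertical path of $G$ joining two of these columns. Since the even columns among $c_1,\dots,c_{2k}$ are exactly $c_2,c_4,\dots,c_{2k}$ (matching the vertical edges of $W_{2k\times k}$ in its even strips) and the odd ones are exactly $c_1,c_3,\dots,c_{2k-1}$ (matching the odd strips), $\hat G$ is a subdivision of $W_{2k\times k}$. So it remains to choose $c_1,\dots,c_{2k}$ making $\hat G$ bipartite.

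I first record a bipartiteness criterion. Put $p^{(i)}_\ell:=\sum_{t=1}^{\ell-1}a^{(i)}_t\bmod 2$, and for each strip $i\in[2,k]$ and each column $\ell\equiv i\pmod 2$ put $q^{(i)}_\ell:=b^{(i)}_\ell+p^{(i-1)}_\ell+p^{(i)}_\ell\bmod 2$; thus $q^{(i)}$ depends only on $G$ and $2$-colours the columns $\ell\equiv i\pmod 2$. To $2$-colour $\hat G$, properly colour each of its row-paths; the colour of $v^{(i)}_{c_j}$ is then $\epsilon_i+p^{(i)}_{c_j}\bmod 2$ for a constant $\epsilon_i\in\{0,1\}$ that we may pick freely, and a short parity computation shows that the vertical path at a column $c$ of strip $i$ is properly coloured if and only if $\epsilon_{i-1}+\epsilon_i\equiv q^{(i)}_c\pmod 2$. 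Hence if, for every strip $i$, all of the chosen columns of parity $i$ share one common value $\delta_i$ of $q^{(i)}$, then a valid $2$-colouring exists precisely when $\epsilon_{i-1}+\epsilon_i\equiv\delta_i$ for all $i\in[2,k]$ --- and this system is always solvable by setting $\epsilon_1:=0$ and propagating. So under this hypothesis $\hat G$ is bipartite.

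Finally I choose the columns by a pigeonhole over \emph{consecutive odd--even pairs}; this is what yields the claimed bound and, crucially, forces the chosen odd and even columns to interleave as a subdivision of $W_{2k\times k}$ demands. For $\ell\in[m/2]$ call $\{2\ell-1,2\ell\}$ the $\ell$-th pair, and colour it by the tuple recording $q^{(i)}_{2\ell-1}$ over all odd strips $i$ together with $q^{(i)}_{2\ell}$ over all even strips $i$. As $W$ has $k-1$ strips in total, there are at most $2^{k-1}$ such tuples, so some colour class contains at least $(m/2)/2^{k-1}=2k\ge k$ pairs. Choosing $k$ of them, with indices $\ell_1<\dots<\ell_k$, and writing $a_t:=2\ell_t-1$ and $b_t:=2\ell_t$, we get $a_1<b_1<a_2<b_2<\dots<a_k<b_k$, where $a_1,\dots,a_k$ are odd columns sharing a common value of $q^{(i)}$ for every odd strip $i$, and $b_1,\dots,b_k$ are even columns sharing a common value of $q^{(i)}$ for every even strip $i$. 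Setting $c_{2t-1}:=a_t$ and $c_{2t}:=b_t$ then satisfies all the requirements above, so the resulting $\hat G$ is a bipartite subgraph of $G$ that is a subdivision of the $(2k\times k)$-wall.

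I expect the main obstacle to be the bipartiteness criterion of the second paragraph: one has to notice that the columns carrying vertical edges must coincide in every row (so we are genuinely choosing a single column set), then isolate the single ``defect'' parity $q^{(i)}_\ell$ attached to each vertical edge, and finally observe that a defect that is constant along a strip can be cancelled by shifting that row's $2$-colouring. Once this criterion is in hand, the odd--even pairing in the last paragraph makes the actual choice of columns --- including the interleaving condition, which is precisely where a naive ``keep at least half the columns'' argument breaks down --- essentially routine.
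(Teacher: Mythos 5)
Your proof is correct, but it takes a different route from the paper. The paper proceeds by induction on the rows: having built a bipartite subgraph $H^{(j-1)}$ on the first $j-1$ rows together with a surviving set $A_{j-1}$ of column-pairs, it applies its general switching lemma (\Cref{MainLemmaConstructBipartiteSubgraphs}) to the vertical paths joining $H^{(j-1)}$ to the next row, keeping an adaptively chosen majority and thus at least half of the surviving pairs at each of the $k-1$ steps, ending with at least $k$ pairs whose columns yield the subdivided $(2k\times k)$-wall. You instead bypass that lemma entirely: you make the parity obstruction explicit by attaching to each vertical path a single defect bit $q^{(i)}_\ell$ (its length corrected by the horizontal parities of its two rows), observe that defects constant along each strip can be cancelled by flipping the $2$-colourings of the rows (the chain $\epsilon_{i-1}+\epsilon_i\equiv\delta_i$ is always solvable), and then do a single non-adaptive pigeonhole over the $k2^k$ consecutive odd--even column pairs, coloured by their $(k-1)$-bit defect vectors; both arguments use the same pairing of columns to force the interleaving needed for a wall subdivision, and both lose the same factor $2^{k-1}$. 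What your version buys is self-containedness and a transparent description of exactly when the restricted subgraph is bipartite, and it even shows that a slightly narrower wall (about $(k-1)2^{k-1}+1$ pairs, i.e.\ half the stated width) already suffices, since the pigeonhole returns $2k$ pairs where only $k$ are needed; what the paper's version buys is modularity, since the same switching lemma drives all of its results. Your counting, the verification that any interleaved choice of columns $c_j\equiv j\pmod 2$ gives a subdivision of $W_{2k\times k}$, and the bipartiteness criterion all check out against the wall's definition (odd strips attach at odd columns, even strips at even columns), so I see no gap.
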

\begin{proof}
	Let ${P}^{(1)},\dots,{P}^{(k)}$ be the horizontal paths in $G$ where each ${P}^{(i)}$ is a subdivision of the path $\bar{P}^{(i)}=(v_1^{(i)},\dots, v_{k2^{k+1}}^{(i)})$ in $W_{k2^{k+1} \times k}$. For even $i \in [2,k]$ and $\ell \in [k2^k]$, let $S^{(i)}_\ell$ be the path in $G$ that corresponds to the subdivided vertical edge $v_{2\ell}^{(i-1)}v_{2\ell}^{(i)}$ in $W_{k2^{k+1} \times k}$ and for odd $i \in [2,k]$, let $S^{(i)}_\ell$ be the path in $G$ that corresponds to the subdivided edge $v_{2\ell-1}^{(i-1)}v_{2\ell-1}^{(i)}$ in $W_{k2^{k+1} \times k}$. To simplify our notation, let $S^{(1)}_\ell$ be an empty subgraph of $G$ for all $\ell \in [k2^k]$. 
	
	Initialise $A_1:=[k2^k]$. We construct $A_1 \supseteq A_2 \supseteq \dots \supseteq A_k$ where $|A_{i}|\geq k2^{k-i}$ for all $i \in [k]$ such that $H^{(j)}:=\bigcup_{i \in [j]} (P^{(i)} \cup (\bigcup_{\ell \in A_j}S_\ell^{(i)}))$ is a bipartite subgraph of $G$ for all $j \in [k]$. We proceed by induction on $j$. For $j=1$, $H^{(1)}=P^{(1)}$ and hence it is a bipartite subgraph of $G$. Now suppose that $j>1$. By induction, $H^{(j-1)}$ is bipartite. Let $\mathcal{S}^{(j)}:=\{S_{\ell}^{(j)}:\ell \in A_{j-1}\}$. For distinct $S,S' \in \mathcal{S}^{(j)}$, the path $S$ joins $H^{(j-1)}$ to $P^{(j)}$, and the paths $S$ and $S'$ are internally disjoint. Since $P^{(j)}$ and $H^{(j-1)}$ are vertex-disjoint bipartite subgraphs of $G$, by \Cref{MainLemmaConstructBipartiteSubgraphs} there exists $\Qcal^{(j)}\sse \mathcal{S}^{(j)}$ where $|\Qcal^{(j)}|\geq |\mathcal{S}^{(j)}|/2$ such that the graph $\hat{H}^{(j)}:=H^{(j-1)}\cup P^{(j)} \cup (\bigcup_{S \in \Qcal^{(j)}} S)$ is bipartite (see \Cref{fig:subdividedwall}). By setting $A_j:=\{\ell \in A_{j-1}:S^{(j)}_{\ell} \in \Qcal^{(j)}\}$, it follows that $|A_j|=|\Qcal^{(j)}|\geq |A_{j-1}|/2\geq k2^{k-j}$ and the subgraph $H^{(j)}$ is bipartite.

\begin{figure}[!htb]
	\begin{center}
		\includegraphics[width=0.95\linewidth]{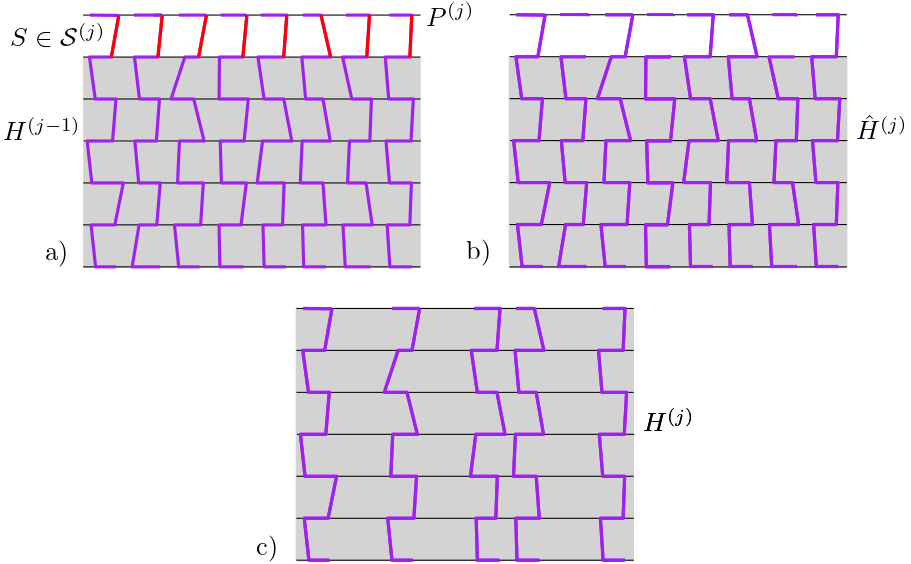}
		\caption{The inductive proof of \Cref{BipartiteSubgraphWall}: a) the set $\mathcal{S}^{(j)}$ of disjoint paths that join $H^{(j-1)}$ to $P^{(j)}$; b) the bipartite subgraph $\hat{H}^{(j)}$; c) the bipartite subgraph $H^{(j)}$.}
		\label{fig:subdividedwall}
	\end{center}
\end{figure}

	Now consider $H^{(k)}$. We have $|A_k| \geq k$. Let $\ell_1,\dots,\ell_k \in A_k$ such that $\ell_i < \ell_{i'}$ whenever $i < i'$. For each $i \in [k]$, let $\hat{P}^{(i)}$ be the subpath of $P^{(i)}$ from $v_{2\ell_1-1}^{(i)}$ to $v_{2\ell_k}^{(i)}$. Let $\hat{G}:=\bigcup_{i \in [k]} (\hat{P}^{(i)} \cup (\bigcup_{j \in [k]}S_{\ell_j}^{(i)}))$. Then $\hat{G}$ is bipartite as it is a subgraph of $H^{(k)}$. We now explain how $\hat{G}$ is a subdivision of the $(2k \times k)$-wall. For each of the following pairs of vertices in $\hat{G}$, there is a path between them which consists solely of horizontal edges: $(v_{2\ell_{j}-1}^{(i)}, v_{2\ell_j}^{(i)})$ where $i,j \in [k]$; and $(v_{2\ell_j}^{(i)},v_{2\ell_{j+1}-1}^{(j)})$ where $i \in [k], j \in [k-1]$. The union of these horizontal paths with the $S_{\ell_j}^{(i)}$ paths where $i,j \in [k]$ defines a subdivision of the $(2k\times k)$-wall, as required.
\end{proof}

\Cref{MaintwBipartiteSubgraph} directly follows from \Cref{TreewidthWall,BipartiteSubgraphWall}. Note that due to \Cref{BipartiteSubgraphWall}, the function $f$ in \Cref{MaintwBipartiteSubgraph} is exponential. It is open whether this can be improved to a polynomial function. The key challenge is whether every subdivision of a wall contains a bipartite subgraph with polynomially large treewidth.

\subsection{Pathwidth and Treedepth}\label{PathwidthTreedepth}
We now move on to consider pathwidth and treedepth. The \emph{closure} of a rooted tree $T$ is the graph obtained from $T$ by adding the edges $uv$ whenever $u$ is an ancestor of $v$. The \emph{closure} of a rooted forest is the closure of each of the rooted trees in the forest. The \emph{height} of a rooted tree is the number of vertices in the longest root to leaf path. The \emph{height} of a rooted forest is the maximum height of its component subtrees. For a graph $G$, the \emph{treedepth}, $\td(G)$, of $G$ is the minimum height of a rooted forest whose closure contains $G$ as a subgraph. Like treewidth and pathwidth, treedepth is an important parameter in structural graph theory (see \cite{nevsetvril2012sparsity}). 

We now explain how results in the literature immediately imply that a graph with sufficiently large pathwidth contains a bipartite subgraph with large pathwidth and a graph with sufficiently large treedepth contains a bipartite subgraph with large treedepth. To a certain degree, these parameters are simpler to deal with since trees have arbitrarily large pathwidth and treedepth. So it suffices to show that a graph with large pathwidth contains a subtree with large pathwidth and that a graph with large treedepth contains a subtree with large treedepth. For pathwidth, the excluded forest theorem demonstrates this.

\begin{thm}[\cite{robertson1983graph,bienstock1991quickly}]\label{pwExcludingForest}
	For every forest $F$, every graph of pathwidth at least $|V(F)|-1$ contains $F$ as a minor.
\end{thm}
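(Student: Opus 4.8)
The plan is to prove the contrapositive: for every forest $F$ and every graph $G$, if $G$ has no $F$-minor then $\pw(G)\le|V(F)|-2$. I would argue by induction on $|V(F)|$, with a secondary induction on $|V(G)|$; the latter lets me assume $G$ is connected, since $\pw$ of a graph is the maximum of $\pw$ over its connected components and every component of an $F$-minor-free graph is again $F$-minor-free.

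First I would dispose of the case that $F$ has no edges: then an $F$-minor is merely a family of $|V(F)|$ pairwise disjoint nonempty vertex sets, so $G$ having no $F$-minor forces $|V(G)|\le|V(F)|-1$, whence $\pw(G)\le|V(G)|-1\le|V(F)|-2$. So assume $F$ has an edge; pick a leaf $v$ of some component of $F$, let $u$ be its neighbour, and set $F':=F-v$, a forest on $|V(F)|-1$ vertices. Two cases are immediate. If $G$ has no $F'$-minor, the inductive hypothesis gives $\pw(G)\le|V(F')|-2\le|V(F)|-2$. If instead there is a vertex $w$ of $G$ such that $G-w$ has no $F'$-minor, the inductive hypothesis gives $\pw(G-w)\le|V(F')|-2=|V(F)|-3$, and adding $w$ to every bag of an optimal path-decomposition of $G-w$ yields a path-decomposition of $G$ of width at most $|V(F)|-2$.

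The difficulty is that these two cases need not cover everything: it can happen that $G$ has an $F'$-minor, and $G-w$ has an $F'$-minor for every vertex $w$, and yet $G$ has no $F$-minor. (For instance $G=P_6$, $F=K_{1,3}$, $F'=P_3$: every $P_6-w$ contains a $P_3$-minor and $P_6$ contains no $K_{1,3}$-minor, while the target bound $\pw(P_6)\le 2$ nevertheless holds.) To get past this I would follow Bienstock, Robertson, Seymour and Thomas and strengthen the statement to a rooted version, roughly: for a forest $F$ with a distinguished vertex $\ell$ (an isolated vertex, or a leaf of some component) and a connected graph $G$ with a distinguished vertex $r$, if $G$ has no $F$-minor whose $\ell$-branch-set contains $r$, then $G$ has a path-decomposition of width at most $|V(F)|-2$ with $r$ in its first bag. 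Taking $\ell$ to be any leaf or isolated vertex of $F$ and $r$ arbitrary recovers the unrooted statement. In the inductive step one still deletes a leaf, but chosen so as to preserve the rooting; when the obstruction above would arise, one instead deletes the root $r$ itself, recurses on the components of $G-r$ with new roots chosen among the neighbours of $r$, and reattaches $r$ to every bag of the concatenated decompositions.

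Carrying this rooted bookkeeping through is the step I expect to be the main obstacle: in the rooted setting one must pin down exactly which $F'$-minor models survive the deletion of a vertex near $r$, and show that whatever survives is confined tightly enough around $r$ to be realised by a path-decomposition of width at most $|V(F)|-2$. Everything else --- the edgeless base case, the reduction to connected graphs, and the ``insert one vertex into every bag'' manipulations --- should be routine.
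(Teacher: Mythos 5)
There is a genuine gap. The paper does not prove this statement at all---it is quoted from Robertson--Seymour and Bienstock--Robertson--Seymour--Thomas---so what you are really being asked for is a self-contained proof of the excluded-forest theorem, and your proposal stops exactly where that theorem's content begins. Your base case, the reduction to connected graphs, the ``add one vertex to every bag'' manipulations, and your diagnosis of why the naive leaf-deletion induction fails (the $P_6$ versus $K_{1,3}$ example is a correct illustration) are all fine, but they are the routine part. The rooted strengthening that is supposed to rescue the induction is only stated, not proved, and you yourself flag its inductive step as ``the main obstacle''; a plan that defers the one step carrying all the difficulty to the cited literature is not a proof.

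Moreover, the rooted statement as you formulated it does not obviously support the recursion you describe. You require the distinguished vertex $\ell$ of $F$ to be a leaf or an isolated vertex, and in the step where you delete the root $r$ of $G$ and recurse on the components of $G-r$, the natural subproblem is: component $C$ with root a neighbour $r'$ of $r$, forest $F':=F-\ell$ with distinguished vertex $u$, the former neighbour of $\ell$. But $u$ need not be a leaf of $F'$, so your inductive hypothesis cannot be invoked for this subproblem; and in the other branch (deleting a leaf of $F$ rather than a vertex of $G$) you never verify that the rooted hypothesis ``no $F$-minor with $r$ in the $\ell$-branch-set'' transfers to a usable hypothesis about $F'$-minors. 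Fixing this requires either a stronger induction statement (e.g.\ allowing an arbitrary distinguished vertex of $F$, or several roots in $G$, or the edge-by-edge bookkeeping actually carried out by Bienstock, Robertson, Seymour and Thomas) together with a full case analysis of which rooted minor models survive vertex deletion---precisely the work that is missing. As it stands, the argument proves the theorem only in the cases your two easy branches cover, which, as your own counterexample shows, is not all cases.
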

The complete binary tree $T_k$ with height $k$ has $2^{k}-1$ vertices and pathwidth $\ceil{k/2}$ \cite{scheffler1989die}. Moreover, since $T_k$ has maximum degree $3$, every graph that contains $T_k$ as a minor also contains $T_k$ as a topological-minor. Hence \Cref{pwExcludingForest} implies the following.

\begin{cor}\label{pwbipartite}
	For every integer $k \geq 1$, every graph $G$ with $\pw(G)\geq 2^{(2k+1)}-2$ contains a bipartite subgraph $\hat{G}\sse G$ such that $\pw(\hat{G})\geq k$.
\end{cor}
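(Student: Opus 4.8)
The plan is to produce $\hat G$ as an explicit subtree of $G$: every tree is bipartite, so the whole task reduces to finding a subtree of $G$ with pathwidth at least $k$. The natural candidate is a subdivision of the complete binary tree $T_{2k+1}$, since $\pw(T_{2k+1})=\lceil (2k+1)/2\rceil = k+1\geq k$, and the number of vertices of $T_{2k+1}$ is exactly tuned to the hypothesis.

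Concretely, I would first apply \Cref{pwExcludingForest} with $F:=T_{2k+1}$. As $T_{2k+1}$ has $2^{2k+1}-1$ vertices, the assumption $\pw(G)\geq 2^{2k+1}-2=|V(T_{2k+1})|-1$ gives that $G$ contains $T_{2k+1}$ as a minor. Since $T_{2k+1}$ has maximum degree $3$, a graph contains it as a minor if and only if it contains it as a topological-minor; hence $G$ has a subgraph $\hat G$ isomorphic to a subdivision of $T_{2k+1}$.

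It then remains to verify the two required properties of $\hat G$. Being a subdivision of a tree, $\hat G$ is itself a tree and therefore bipartite. Contracting all the subdivided edges of $\hat G$ yields $T_{2k+1}$, so $T_{2k+1}$ is a minor of $\hat G$; as pathwidth is minor-monotone this gives $\pw(\hat G)\geq \pw(T_{2k+1})=k+1\geq k$, as desired.

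There is no substantial obstacle here: the argument is just an assembly of \Cref{pwExcludingForest}, the known identity $\pw(T_k)=\lceil k/2\rceil$, and minor-monotonicity of pathwidth. The only point needing (routine) care is the passage from ``$T_{2k+1}$ is a minor of $G$'' to ``$G$ has a bipartite subtree'': this uses the standard fact that for graphs of maximum degree at most $3$, being a minor and being a topological-minor coincide, which is precisely what upgrades the abstract minor into a genuine subgraph of $G$.
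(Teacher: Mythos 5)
Your proposal is correct and is essentially the paper's own argument: apply \Cref{pwExcludingForest} to the complete binary tree $T_{2k+1}$ (which has $2^{2k+1}-1$ vertices and pathwidth $\lceil(2k+1)/2\rceil$), use the maximum-degree-$3$ fact to upgrade the minor to a subdivision, and note that this subdivision is a tree (hence bipartite) whose pathwidth is at least that of $T_{2k+1}$ by minor-monotonicity. No discrepancies worth noting.
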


For treedepth, \citet{nevsetvril2012sparsity} proved that every graph with treedepth at least $n$ contains an $n$-vertex path, $P_n$, as a subgraph. They also showed that the treedepth of $P_n$ is at least $\ceil{\log_2(n+1)}$. Hence their results imply the following.

\begin{cor}\label{tdbipartite}
	For every integer $n \geq 1$, every graph $G$ with $\td(G)\geq 2^n-1$ contains a bipartite subgraph $\hat{G}\sse G$ such that $\td(\hat{G})\geq n$.
\end{cor}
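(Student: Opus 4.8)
The plan is to mimic the proof of \Cref{pwbipartite}, replacing ``forest/tree'' by ``path'': a path is bipartite, so the large-treedepth path we extract is itself the desired bipartite subgraph, and no colour-switching argument (as in \Cref{MainLemmaConstructBipartiteSubgraphs}) is needed.

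In detail, assume $\td(G)\geq 2^n-1$. Since $n\geq 1$ we have $2^n-1\geq 1$, so by the first result of \citet{nevsetvril2012sparsity} quoted above, $G$ contains the path $P_{2^n-1}$ on $2^n-1$ vertices as a subgraph. I would take $\hat{G}:=P_{2^n-1}$; this is a subgraph of $G$ and, being a path, it is bipartite.

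It then remains to bound $\td(\hat{G})$ from below. By the second cited result, $\td(P_m)\geq \lceil \log_2(m+1)\rceil$ for every $m\geq 1$; applying this with $m=2^n-1$ gives $\td(\hat{G})\geq \lceil \log_2(2^n-1+1)\rceil=\lceil \log_2(2^n)\rceil=n$, which completes the proof.

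There is essentially no obstacle here: the only points to check are that the constant $2^n-1$ in the hypothesis is exactly what makes the logarithm round up to $n$, and that $n\geq 1$ guarantees the extracted path is non-empty. All the real content is supplied by the two quoted theorems of \citet{nevsetvril2012sparsity}.
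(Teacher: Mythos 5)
Your argument is correct and is exactly the one the paper intends: extract the path $P_{2^n-1}$ guaranteed by the first result of \citet{nevsetvril2012sparsity}, note it is bipartite, and apply the lower bound $\td(P_m)\geq\lceil\log_2(m+1)\rceil$ with $m=2^n-1$ to get treedepth at least $n$. No differences worth noting.
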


Again, it is open whether the bounds for pathwidth and treedepth in \Cref{pwbipartite,tdbipartite} can be improved to polynomial functions. 

We now explain how a polynomial bound for treewidth would imply polynomial bounds for both pathwidth and treedepth. We make use of the following two theorems from the literature. The first is by \citet{groenland2021approximating}.

\begin{thm}[\cite{groenland2021approximating}]\label{SmallTwLargePw}
	For all integers $k,h\geq 1$, every graph $G$ with $\tw(G)\geq k-1$ has $\pw(G)\leq kh+1$ or contains a subdivision of a complete binary tree of height $h$.
\end{thm}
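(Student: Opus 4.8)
The plan is to fix a tree-decomposition $(T,(B_x)_{x\in V(T)})$ of $G$ with all bags of size at most $k$ (using the hypothesis in the form $\tw(G)\le k-1$), taken subcubic and smooth by the usual transformations, and then to split into cases according to the pathwidth of the decomposition tree $T$ itself. (Specialised to $T_h$, which by the discussion preceding \Cref{pwbipartite} has pathwidth linear in $h$, this improves the threshold $2^h-2$ that \Cref{pwExcludingForest} would give, at the cost of also bounding $\tw(G)$.)

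First I would handle the case $\pw(T)\le h$. Take a path-decomposition $(P,(C_i))$ of the \emph{tree} $T$ of width $\pw(T)$, so each $|C_i|\le h+1$, and set $D_i:=\bigcup_{x\in C_i}B_x$ along the same path $P$. That $(P,(D_i))$ is a path-decomposition of $G$ is routine: every edge of $G$ lies in some $B_x$ with $x\in C_i$, hence in $D_i$; and for $v\in V(G)$ the nodes with $v\in B_x$ induce a connected subtree $T_v$ of $T$, so the indices $i$ with $v\in D_i$ are exactly those with $C_i\cap V(T_v)\ne\emptyset$, which form an interval of $P$. This already gives $\pw(G)=O(kh)$; to get down to the stated $\pw(G)\le kh+1$ I would pad each $D_i$ not with all of each $B_x$ but only with the portion of $B_x$ still active at step $i$, using that vertices of $B_x$ occurring below a child $c$ already lie in $B_c$.

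For the case $\pw(T)>h$, a tree of pathwidth larger than $h$ contains, by the standard recursive characterisation of the pathwidth of trees, a subtree $Y$ that is a subdivision of $T_h$. The remaining task is to transfer $Y$ into $G$: replace each branch node $a$ of $Y$ by a vertex $v_a\in B_a$ and each $T$-path of $Y$ between consecutive branch nodes $a,b$ by a path of $G$ from $v_a$ to $v_b$ lying inside the union of the bags along that $T$-path; as those bags span a connected subgraph of $G$ and distinct $Y$-paths meet only at branch nodes, these $G$-paths can be taken pairwise internally disjoint, and their union is a subdivision of $T_h$ in $G$. To avoid assuming connectivity of the decomposition, I would instead recast the whole proof as one bottom-up induction on the rooted tree-decomposition, maintaining at each node $x$ that the graph $G_x$ spanned by the bags at and below $x$ either already contains the desired subdivision or admits a path-decomposition of width at most $km+1$ whose last bag contains $B_x$, where $m\le h$ is the largest height of a subdivided complete binary tree of $G_x$ anchored at $B_x$; a node two of whose children carry anchored subdivisions of height $m-1$ produces one of height $m$, and otherwise one concatenates the children's path-decompositions after appending $B_x$ to every bag.

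The hard part will be this transfer step --- equivalently, the inductive merging of two anchored binary-tree subdivisions through a single bag. One must guarantee that the branching exhibited by the abstract tree $T$ is realised by genuinely vertex-disjoint paths of $G$; this is exactly where the bound $|B_x|\le k$, together with a suitable connectivity (leanness) property of the chosen tree-decomposition, is used, and where the precise definition of ``anchored'' has to be engineered so that it transfers correctly from a child's bag to its parent's bag. The complementary case is, by comparison, just the textbook inequality bounding $\pw(G)$ by a function of $\tw(G)$ and $\pw(T)$.
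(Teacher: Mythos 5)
This theorem is not proved in the paper at all: it is quoted verbatim from \cite{groenland2021approximating} (and, as you implicitly noted by reading the hypothesis as $\tw(G)\le k-1$, the ``$\geq$'' in the statement is a typo --- that is also how the result is applied later). So your attempt has to be judged on its own merits, and there it has a genuine gap. The first dichotomy, on the pathwidth of the decomposition tree $T$, is unsound: a subdivision of $T_h$ living in $T$ cannot in general be transferred into $G$. Concretely, the $n$-vertex path $v_1,\dots,v_n$ admits a tree-decomposition of width $2$ whose tree is a complete binary tree with $n$ leaves (give the $i$-th leaf the bag $\{v_i,v_{i+1}\}$, and give an internal node $x$ the at most three vertices whose pairs of consecutive leaves start at, end at, or straddle the subtree of $x$). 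Here $\pw(T)$ is of order $\log n$, yet $G$ has maximum degree $2$ and contains no subdivision of $T_3$ whatsoever. So in your case $\pw(T)>h$ the ``transfer step'' is not merely the hard part --- it is impossible at this level of generality: the unions of bags along distinct $Y$-paths reuse the same vertices of $G$, and nothing supplies a degree-$3$ branch vertex or internally disjoint realising paths.

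Your fallback --- a bottom-up induction over the rooted decomposition maintaining either an ``anchored'' binary-tree subdivision of height $m$ or a path-decomposition of width $km+1$ whose last bag contains $B_x$ --- is the right kind of strategy and much closer in spirit to the actual argument of Groenland, Joret, Nadara and Walczak, but as written it defers precisely the content of the theorem. The two unproved claims are exactly where the work lies: (i) two children carrying anchored height-$(m-1)$ subdivisions must be merged into a height-$m$ subdivision anchored at $B_x$, even though both may attach to the same vertices of $B_x$ and their anchors must be rerouted from the child bags to $B_x$ by a linkage/leanness argument using $|B_x|\le k$; and (ii) the width bookkeeping only closes if the $+k$ incurred by appending $B_x$ is always accompanied by a strict increase of the anchored height, which is not automatic when several children all realise height exactly $m$. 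Until these are proved, the induction does not go through. A further, minor, point: in the easy case your construction gives width $k(h+1)-1$ rather than $kh+1$, and the proposed fix (keeping only the ``still active'' part of each $B_x$) is unsubstantiated --- harmless for the way \Cref{SmallTwLargePw} is used in this paper, where any polynomial bound suffices, but not the stated constant. In short: drop the $\pw(T)$ route, and recognise that the anchoring/merging lemma you flagged as ``the hard part'' is essentially the whole theorem, which is why the paper cites \cite{groenland2021approximating} rather than proving it.
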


 For treedepth, \citet{kawarabayashi2018polynomial} first demonstrated a polynomial excluded-minor approximation for graphs with large treedepth which was later improved by \citet{czerwinski2019improved}.
\begin{thm}[\cite{kawarabayashi2018polynomial,czerwinski2019improved}]\label{SmallTwLargeTd}
	There exists a positive constant $c$ such that for every integer $k \geq 1$ and graph $G$ with $\td(G)\geq ck^3$, either $\tw(G) \geq k$, or $G$ contains a subdivision of a complete binary tree of depth $k$ as a subgraph, or $G$ contains a path of length $2^k$.
\end{thm}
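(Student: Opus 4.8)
The statement is quoted from the literature, but here is the route I would take, building on \Cref{SmallTwLargePw}. The plan is to prove the contrapositive: fix $k\ge 1$ and a graph $G$ with $\tw(G)\le k-1$ that contains no subdivision of the complete binary tree of depth $k$ and no path of length $2^k$, and show $\td(G)=O(k^3)$. Since treedepth, pathwidth, treewidth and the length of a longest path are all determined componentwise, and $\td$ of a disjoint union is the maximum over components, I may assume $G$ is connected.

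The first step is to trade treewidth for pathwidth. Applying \Cref{SmallTwLargePw} with $h:=k$, the hypotheses $\tw(G)\le k-1$ and ``no subdivision of the depth-$k$ complete binary tree'' force $\pw(G)\le k\cdot k+1=O(k^2)$. It then suffices to prove a purely width-theoretic statement with no mention of minors or subdivisions: \emph{every graph $H$ satisfies $\td(H)\le (\pw(H)+1)\cdot\lceil\log_2(\lambda(H)+1)\rceil$, where $\lambda(H)$ is the number of vertices of a longest path of $H$}. Indeed, plugging $\pw(G)=O(k^2)$ and $\lambda(G)\le 2^k$ into this inequality gives $\td(G)\le O(k^2)\cdot(k+1)=O(k^3)$, from which an explicit absolute constant $c$ can be read off.

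To prove that auxiliary bound I would build a shallow elimination forest recursively. Assume $H$ connected (otherwise treat each component separately). Using the vertex-separation characterisation of pathwidth, fix an ordering $v_1,\dots,v_n$ of $V(H)$ so that for every $j$ the active boundary $X_j:=\{v_i : i\le j,\ v_iv_{i'}\in E(H)\text{ for some }i'>j\}$ has $|X_j|\le \pw(H)$. A suitably chosen $X_q$ (of size at most $\pw(H)$) is a separator whose deletion splits $H$ into two parts, each of pathwidth at most $\pw(H)$; eliminate $X_q$ first, costing at most $\pw(H)$ levels, and recurse on the two parts. The resulting elimination forest has height at most $\pw(H)$ times the depth of this recursion.

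Thus everything comes down to bounding the recursion depth by $O(\log \lambda(H))$, and this is the one place where the absence of a long path enters — and the main obstacle. The naive choice of $X_q$ (splitting the index interval in half) only gives recursion depth $\Theta(\log n)$. Instead one must split by a \emph{balanced path-separator}: delete $O(\pw(H))$ vertices so that every remaining component has longest path at most roughly half that of $H$. The technical heart is showing that such small balanced path-separators exist in graphs of bounded pathwidth (e.g.\ by a potential-function argument on a path-decomposition), so that the longest path essentially doubles with each level one climbs, and hence a recursion branch of depth $t$ certifies a path on about $2^{t}$ vertices of $H$; with $\lambda(H)\le 2^k$ this caps the depth at $O(k)$. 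Granting this, the displayed inequality holds, the contrapositive follows, and with it the theorem.
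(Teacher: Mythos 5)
This theorem is not proved in the paper---it is quoted from Kawarabayashi--Rossman and Czerwi\'nski--Nadara--Pilipczuk---so your sketch has to stand on its own, and it does not. Step 1 is fine: reading \Cref{SmallTwLargePw} in its intended direction ($\tw(G)\le k-1$ and no subdivision of the depth-$k$ complete binary tree force $\pw(G)\le k^2+1$), and plugging $\pw=O(k^2)$, $\lambda\le 2^k$ into the inequality $\td(H)\le(\pw(H)+1)\lceil\log_2(\lambda(H)+1)\rceil$ would indeed give $O(k^3)$. But that inequality \emph{is} the entire content of the cited result, and the one mechanism you offer for it is false. You claim that in a connected graph $H$ of pathwidth $p$ one can delete $O(p)$ vertices so that every component has longest path at most about $\lambda(H)/2$. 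Take $T$ the complete binary tree of height $h$ (levels $0,\dots,h-1$), so $\pw(T)=\lceil h/2\rceil$ and $\lambda(T)=2h-1$. Let $t=\lfloor h/2\rfloor$ and let $L$ be the $2^t$ vertices at level $t$; below each $u\in L$ there is a downward path on $h-t$ vertices, and for distinct $u,v\in L$ the path joining a deepest descendant of $u$ to one of $v$ through their common ancestor has at least $2(h-t+1)-1\ge h+1>\lambda/2$ vertices. If $S$ is any set such that every component of $T-S$ has longest path at most $\lambda/2$, let $A$ be the set of $u\in L$ whose rooted subtree avoids $S$; then $|A|\ge 2^t-|S|$, and for any two members of $A$ the set $S$ must hit the (unique) connecting path, whose internal vertices all lie above level $t$. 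Hence the members of $A$ lie in distinct components of the top tree minus $S$, of which there are at most $2|S|+1$ (maximum degree $3$), giving $|S|\ge(2^t-1)/3=2^{\Omega(h)}$, exponentially larger than $\pw(T)$. So no ``balanced path-separator'' of size $O(\pw)$---or even $\mathrm{poly}(\pw)$---exists, and the recursion you describe cannot make geometric progress in $\lambda$ at cost $O(\pw)$ per level.

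This is a genuine gap, not a presentational one: the potential-function existence claim you defer to is exactly where the difficulty of the theorem lives, and as formulated it is unprovable. (Note also that your auxiliary inequality is asserted for \emph{all} graphs $H$, with no exclusion of binary-tree subdivisions, so you cannot appeal to the hypotheses of the theorem to rule out the counterexample; whether a halving separator exists under that extra assumption is again essentially the content of the cited papers, and you neither state nor prove such a version.) The correct proofs in the literature bound treedepth by a much more delicate amortized argument---for instance, for trees one can get $\td\le O(\pw\cdot\log\lambda)$ by repeatedly splitting along a ``main path'' whose removal decreases pathwidth, with the binary tree showing that progress is sometimes only additive in $\lambda$---and reproducing something of that kind is what your sketch would need before the displayed inequality, and hence the theorem, follows.
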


\begin{lem}
	Suppose there exists a polynomial function $f$ such that every graph $G$ with $\tw(G)\geq f(k)$ contains a bipartite subgraph $\hat{G}\sse G$ with $\tw(\hat{G})\geq k$. Then there exists polynomial functions $g$ and $h$ such that:
	\begin{compactenum}
		\item Every graph $G$ with $\pw(G)\geq g(k)$ contains a bipartite subgraph $\hat{G}\sse G$ such that $\pw(\hat{G})\geq k$; and
		\item Every graph $G$ with $\td(G)\geq h(k)$ contains a bipartite subgraph $\hat{G}\sse G$ such that $\td(\hat{G})\geq k$.		
	\end{compactenum}
\end{lem}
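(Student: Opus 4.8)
The plan is to prove the pathwidth statement (1) and the treedepth statement (2) separately, in each case splitting on whether $\tw(G)$ is large. The unifying point is that any bipartite subgraph $\hat G$ supplied by the hypothesis automatically satisfies $\pw(\hat G)\ge\tw(\hat G)$ and $\td(\hat G)\ge\tw(\hat G)+1$, so a bipartite subgraph of \emph{large treewidth} is already a bipartite subgraph of large pathwidth and of large treedepth. When instead $\tw(G)$ is small while $\pw(G)$ (resp.\ $\td(G)$) is large, the excluded-minor approximations \Cref{SmallTwLargePw} (resp.\ \Cref{SmallTwLargeTd}) force a subdivision of a large complete binary tree, or a long path, inside $G$; such a subgraph is a forest, hence bipartite, and has large pathwidth (resp.\ treedepth).

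For (1), given $G$ with $\pw(G)\ge g(k)$ I would apply \Cref{SmallTwLargePw} with $\tw(G)+1$ in place of its ``$k$'' and with $2k$ in place of its ``$h$''. Then either $G$ contains a subdivision $S$ of the complete binary tree of height $2k$, or $\pw(G)\le(\tw(G)+1)\cdot 2k+1$. In the first case $S$ is a tree, hence bipartite; the complete binary tree of height $2k$ is a minor of $S$ with pathwidth $k$ \cite{scheffler1989die}, and pathwidth is minor-monotone, so $\pw(S)\ge k$ and I take $\hat G:=S$. In the second case, since $\pw(G)\ge g(k)$, the bound $\pw(G)\le(\tw(G)+1)\cdot 2k+1$ forces $\tw(G)\ge f(k)$ provided $g(k):=2k\,(f(k)+1)+1$; then the hypothesis of the lemma yields a bipartite $\hat G\sse G$ with $\tw(\hat G)\ge k$, whence $\pw(\hat G)\ge\tw(\hat G)\ge k$. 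As $f$ is a polynomial, so is $g$.

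For (2), given $G$ with $\td(G)\ge h(k)$ I would set $k':=f(k)+k$ (a polynomial with $k'\ge f(k)$ and $k'\ge k$) and apply \Cref{SmallTwLargeTd} with parameter $k'$; choosing $h(k):=c\,(f(k)+k)^3$, where $c$ is the constant of \Cref{SmallTwLargeTd}, ensures $\td(G)\ge c(k')^3$, so one of its three alternatives holds. If $\tw(G)\ge k'\ge f(k)$, the hypothesis of the lemma gives a bipartite $\hat G$ with $\tw(\hat G)\ge k$ and hence $\td(\hat G)\ge\tw(\hat G)+1>k$. If $G$ contains a subdivision $S$ of the complete binary tree of depth $k'$, then $S$ is a tree, hence bipartite, and by minor-monotonicity of treedepth $\td(S)\ge k'\ge k$, since that binary tree has treedepth at least $k'$. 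If $G$ contains a path $P$ of length $2^{k'}$, then $P$ is bipartite and $\td(P)\ge\lceil\log_2(2^{k'}+2)\rceil=k'+1>k$ by \cite{nevsetvril2012sparsity}. In every case a bipartite subgraph of treedepth at least $k$ is produced, and $h$ is a polynomial.

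The step that looks like the main obstacle is the ``$\tw(G)$ large'' branch: one might fear that a bipartite subgraph of large treewidth need not have large pathwidth or treedepth, and be tempted to detour through grid- or wall-minor machinery. This worry evaporates on recalling the unconditional chain $\tw\le\pw\le\td-1$. The genuine content is then just the parameter bookkeeping that pushes the dichotomies of \Cref{SmallTwLargePw,SmallTwLargeTd} into a usable alternative, together with the routine observations that a subdivision of a complete binary tree (or of a path) is bipartite and has pathwidth (resp.\ treedepth) no smaller than the graph it subdivides.
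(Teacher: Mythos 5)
Your proof is correct and follows essentially the same route as the paper: split on whether the treewidth is large enough to invoke the hypothesis (then use $\tw\le\pw\le\td-1$), and otherwise use \cref{SmallTwLargePw} and \cref{SmallTwLargeTd} to extract a subdivided complete binary tree or long path, which is a bipartite subgraph of large pathwidth or treedepth. Your parameter bookkeeping (e.g.\ invoking \cref{SmallTwLargeTd} with $k'=f(k)+k$) is in fact slightly more careful than the paper's, which applies it with parameter $f(k)$ and implicitly assumes $f(k)$ dominates $k$.
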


\begin{proof}
	Let $g(k):=2f(k)k+1$ and $h(k):=cf(k)^3$ where $c$ is from \Cref{SmallTwLargeTd}. Let $G$ be a graph. If $\tw(G)\geq f(k)$ then there exists a bipartite subgraph $\hat{G}\sse G$ such that $\tw(\hat{G})\geq k$. Since treedepth is bounded from below by pathwidth and pathwidth is bounded from below by treewidth, we are done. Therefore, we may assume that $\tw(G)\leq f(k)$. Now if $\pw(G)\geq g(k)$, then by \Cref{SmallTwLargePw}, $G$ contains a subdivision of the complete binary tree $T_{2k}$ as a subgraph. As $\pw(T_{2k})=h$, by setting $\hat{G}$ to be the subdivision of $T_{2k}$ in $G$, it follows that $\hat{G}$ is bipartite and $\pw(\hat{G})\geq k$. Now if $\td(G)\geq h(k)$, then $G$ either contains a subdivision of a complete binary tree of depth $2k$ or a path of length $2^k$. In either case, by setting $\hat{G}$ to be the subdivision of the complete binary tree of depth $2k$ or the path of length $2^k$, $\hat{G}$ is bipartite and we have $\td(\hat{G})\geq k$.
\end{proof}

\section{Conclusion}
We conclude with the following question. Say a graph parameter $\beta$ is \emph{unbounded on bipartite graphs} if for every integer $k \geq 0$, there exists a bipartite graph $G$ such that $\beta(G)\geq k$. For every such parameter $\beta$, does there exists a function $f$ such that every graph $G$ with $\beta(G)\geq f(k)$ contains a bipartite subgraph $\hat{G}\sse G$ with $\beta(\hat{G})\geq k$?
\makeatletter
\renewcommand\subsection{\@startsection{subsection}{3}{\z@}%
	{-3.25ex\@plus -1ex \@minus -.2ex}%
	{-1.5ex \@plus -.2ex}% Formerly 1.5ex \@plus .2ex
	{\normalfont\normalsize\bfseries}}
\makeatother
\subsection*{Note:} After the first release of this paper, Gwena\"{e}l Joret and Piotr Micek informed us that our main results were already known. In particular, \citet{thomassen1988disjoint} proved a generalisation of \Cref{BipartiteSubgraphWall} and \citet{GGRS2009oddminor} proved \Cref{MainHadwigerBipartiteSubgraph}. Furthermore, Alex Scott answered our concluding question in the negative with the following construction. For an odd integer $n \geq 1$, let $C_n$ be a cycle on $n$ vertices. For a non-empty graph $G$, let $\textsf{a}(G)$ be the maximum odd integer $n\geq 1$ such that $C_n$ is a subgraph of $G$ and let $\textsf{b}(G)$ be the maximum integer $t \geq 0$ such that $G$ contains $K_{t,t}$ as a subgraph. Define $\textsf{c}(G):=\max\{\textsf{a}(G),\textsf{b}(G)\}$. Then $\textsf{c}(G)$ is unbounded on bipartite graphs. However, $\textsf{c}(C_n)=n$ for every odd integer $n \geq 1$ and $\textsf{c}(\hat{G})\leq 1$ whenever $\hat{G}$ is a bipartite subgraph of $C_n$, as required. Many thanks to Gwena\"{e}l Joret, Piotr Micek and Alex Scott for this feedback.

\fontsize{10}{11} 
\selectfont 
\let\oldthebibliography=\thebibliography
\let\endoldthebibliography=\endthebibliography
\renewenvironment{thebibliography}[1]{%
	\begin{oldthebibliography}{#1}%
		\setlength{\parskip}{0.3ex}%
		\setlength{\itemsep}{0.3ex}%
}{\end{oldthebibliography}}
	
%\input{Properties of Graph Products.bbl}
% OR
\bibliographystyle{DavidNatbibStyle}
\bibliography{RobReferences}

\end{document}